\newtheorem{thm}{Theorem}[section]
\newtheorem{remark}[thm]{Remark}
\theoremstyle{definition}
\newcommand{\scr}[1]{\mathscr #1}
\definecolor{wco}{rgb}{0.5,0.2,0.3}
\numberwithin{equation}{section} \theoremstyle{remark}
\newcommand{\ua}{\uparrow}
\newcommand{\var}{\mathbb{V}}
\newcommand{\di}{\,\textup{d}}
\newcommand{\NN}{{\mathbb N}}
\newcommand{\RR}{{\mathbb R}}
\newcommand{\notiz}[1]{}
\newcommand{\todo}[1]{\vspace{5 mm}
\par \noindent
\marginpar{\textsc{Simon}} 
\framebox{\begin{minipage}[c]{0.95
\textwidth}\raggedright \texttt{#1} \end{minipage}}
\vspace{5 mm}\par}
\newcommand{\todoStef}[1]{\vspace{5 mm}
\par \noindent
\marginpar{\textsc{Stefan}} 
\framebox{\begin{minipage}[c]{0.95
\textwidth}\raggedright \texttt{#1} \end{minipage}}
\vspace{5 mm}\par}
\title{
A mean-field extension of the LIBOR market model
\thanks{\emph{Disclaimer.} The opinions expressed in this article are those of the authors and do not necessarily reflect the official position of the Austrian Financial Market Authority.}\\ 
}   
\author{Sascha Desmettre\footnote{Insitute of Financial Mathematics and Applied Number Theory, Johannes Kepler University Linz, Altenbergerstrasse~69, A-4040 Linz, E-mail:sascha.desmettre@jku.at}, Simon Hochgerner\footnote{\"Osterreichische Finanzmarktaufsicht, Otto-Wagner-Platz 5, A-1090 Wien, simon.hochgerner@fma.gv.at}, Sanela Omerovic\footnote{\"Osterreichische Finanzmarktaufsicht, Otto-Wagner-Platz 5, A-1090 Wien, omerovic@alumni.tugraz.at}, 
 Stefan Thonhauser\footnote{Insitute of Statistics, Graz University of Technology, Kopernikusgasse 24/III, A-8010 Graz, E-mail:stefan.thonhauser@math.tugraz.at} 
}
\date{\today}
\begin{document}
\allowdisplaybreaks
\def\R{\mathbb R}  \def\ff{\frac} \def\ss{\sqrt} \def\B{\mathbf
B}
\def\N{\mathbb N} \def\kk{\kappa} \def\m{{\bf m}}
\def\ee{\varepsilon}\def\ddd{D^*}
\def\dd{\delta} \def\DD{\Delta} \def\vv{\varepsilon} \def\rr{\rho}
\def\<{\langle} \def\>{\rangle} \def\GG{\Gamma} \def\gg{\gamma}
  \def\nn{\nabla} \def\pp{\partial} \def\E{\mathbb E}
\def\d{\,\text{\rm{d}}} 
\def\bb{\beta} \def\aa{\alpha} \def\D{\scr D}
  \def\si{\sigma} \def\ess{\text{\rm{ess}}}
\def\beg{\begin} \def\beq{\begin{equation}}  \def\F{\scr F}
\def\Ric{\text{\rm{Ric}}} \def\Hess{\text{\rm{Hess}}}
\def\e{\text{\rm{e}}} \def\ua{\underline a} \def\OO{\Omega}  \def\oo{\omega}
 \def\tt{\tilde} \def\Ric{\text{\rm{Ric}}}
\def\cut{\text{\rm{cut}}} \def\P{\mathbb P} \def\ifn{I_n(f^{\bigotimes n})}
\def\C{\scr C}   \def\G{\scr G}   \def\aaa{\mathbf{r}}     \def\r{r}
\def\gap{\text{\rm{gap}}} \def\prr{\pi_{{\bf m},\varrho}}  \def\r{\mathbf r}
\def\Z{\mathbb Z} \def\vrr{\varrho} \def\ll{\lambda}
\def\L{\scr L}\def\Tt{\tt} \def\TT{\tt}\def\II{\mathbb I}
\def\i{{\rm in}}\def\Sect{{\rm Sect}}  \def\H{\mathbb H}
\def\M{\scr M}\def\Q{\mathbb Q} \def\texto{\text{o}} \def\LL{\Lambda}
\def\Rank{{\rm Rank}} \def\B{\scr B} \def\i{{\rm i}} \def\HR{\hat{\R}^d}
\def\to{\rightarrow}\def\l{\ell}\def\iint{\int}
\def\EE{\scr E}\def\no{\nonumber}
\def\A{\scr A}\def\V{\mathbb V}\def\osc{{\rm osc}}
\def\BB{\scr B}\def\Ent{{\rm Ent}}
\def\U{\scr U}\def\8{\infty} \def\si{\sigma}\def\1{\lesssim}

\maketitle 
\begin{abstract}
We introduce a mean-field extension of the LIBOR market model (LMM) which preserves the basic features of the original model. Among others, these features are the martingale property, a directly implementable calibration and an economically reasonable parametrization of the classical LMM. At the same time, the mean-field LIBOR market model (MF-LMM) is designed to reduce the probability of exploding scenarios, arising in particular in the market-consistent valuation of long-term guarantees. To this end, we prove existence and uniqueness of the corresponding MF-LMM and investigate its practical aspects, including a Black '76-type formula. Moreover, we present an extensive numerical analysis of the MF-LMM. The corresponding Monte Carlo method is based on a suitable interacting particle  system which approximates the underlying mean-field equation. 
\end{abstract} 

\section{Introduction}
LIBOR market models (LMMs) are nowadays widely used by practitioners to valuate market instruments which depend on interest rate movements, such as caps or swaptions. 
These models are popular due to their relative ease of calibration coupled with the possibility to economically interpret the relevant parameters. LMMs have been developed (\cite{MR97,Brigo,Rebonato}) with a view towards valuating interest rate derivatives with a maturity/tenor structure at the the order of months or a few years. 

Recently, the valuation of long term guarantees has become increasingly important in the life insurance sector. The regulatory framework Solvency~II \cite{L1}, which was implemented by the European Union per January 1, 2016, requires European insurers to assign market consistent values to their liability portfolios. For the case of life insurance with profit participation this means that cash flows have to be projected along arbitrage free scenarios to yield a Monte Carlo method of calculating the associated expected value (\cite{GH21,HG19,VELP17}). 
A life insurance portfolio may have a time to run-off of several decades and it is not unusual to have a projection horizon of $60$ years, or more. Indeed, the EIOPA risk free curve is published with a length of $120$ years (\cite{EIOPA_curve}).

For the aforementioned reasons LMMs have become very popular also in the insurance sector. However, because of the projection horizon of a typical life insurance portfolio, the generated scenarios often suffer from blow-up. In this context, blow-up or explosion means that there is a significant number of scenarios (e.g., more than $1\%$) such that the forward rate (for any maturity and any point in time) exceeds a predefined threshold (e.g., $50\%$ of interest). 
The explosion problem of LMMs is also theoretically well-known (see e.g. \cite{SG}). 

This is a practical problem for two reasons: Firstly, extremely high interest rate scenarios are unlikely since central banks act to stabilize rates around a given ultimate forward rate target. Hence, a significant percentage of exploding rates hints at  unrealistic scenario evolution. Secondly, explosion of rates over a longer period of time leads to discount factors below machine accuracy, resulting in a vanishing cash-flow in the Monte Carlo routine. 

To mitigate the explosion problem, there are two popular and practical approaches (\cite[p.~38f]{AP20}): 
\begin{itemize}
    \item 
    \emph{Volatility freeze:}
    If a scenario breaches a predefined threshold (e.g., $50\%$) then the scenario evolves according to the prevailing term structure from that time onward. That is, the volatility is formally set to $0$ in this scenario. 
    \item
    \emph{Capping:}
    If a scenario breaches a predefined cap (e.g., $70\%$) then the rates are set equal to this cap from that time forward.
\end{itemize}
While these methods are clearly very effective in avoiding explosion, there are caveats: The cap has negative consequences for the martingale properties of the set of risk-free scenarios, and both reduce the scenario implied volatility; which violates market-consistency. The German Association of Actuaries (DAV) outlines that a capping of exploding interest rates should be avoided as it violates in general the risk neutral framework (\cite{DAV1}, \cite{DAV2}). 

The no-arbitrage property is important in practice also as a necessary condition for the applicability of the so-called leakage test (see \cite{HG19}). 

In this paper, we work towards an extension of LMMs such that all the salient features (ease of calibration, economic interpretation of parameters, martingale property) are preserved, but with the added benefit that the probability of blow-up is significantly reduced. 
Therefore, we introduce \emph{mean-field LIBOR market models} (MF-LMMs). The essential idea here is that the mean-field dynamics can depend on properties of the empirically observed distribution of the scenario set. Specifically, we use the observed second moment as a measure for explosion and design the evolution equation in such a way that the growth of the solution's second moment is dampened. 

We remark that our main motivation for this mean-field extension is the valuation of long term guarantees. To this end, we provide a numerical study which shows that the probability of explosion is considerably reduced in the MF-LMM framework. This is viewed as evidence that the mean-field approach has, when properly implemented, effects which are desirable from the economic perspective. 
Moreover, in order to have a tractable extension of the standard LMM technique, we consider a standard LMM calibration and an a posteriori mean-field augmentation of the dynamics. Thus the parametrization of the mean-field component is justified by the economic plausibility of the resulting scenarios while the non-mean-field parameters follow from a standard calibration routine. 

Concerning the economic motivation for MF-LMMs we stress that interest rate movements are stabilized by central bank policy. In short rate models this stylized fact is often incorporated by means of a mean-reversion assumption. This is an observation which is also reflected by the literature strand on \textit{backward-looking rates} that disentangles LIBOR rates in a compounded overnight risk-free rate plus a fixed spread (see e.g. \cite{VPSine}, \cite{LM19, LM19_}), based on the ISDA fallback protocol (see \cite{ISDA1,ISDA2}) for LIBOR benchmarks. Whereas this LIBOR spread approach will be pertinent in the long-term future of fixed-term reference rate modeling, our approach focuses on existing long-term contracts based on the LIBOR that are still present and popular among practitioners.

\subsection{Outline}\label{sec:orga}
This paper is organized as follows: In Section~\ref{sec:model}, the MF-LMM is introduced and its existence is proved by showing that the corresponding mean-field SDE\footnote{For the remainder of the paper and consistently with e.g.~\cite{BLPR}, we use the terms mean-field SDE and McKean-Vlasov SDE synonymously.} is well-posed and has a unique strong solution. Section~\ref{SEC:Cali} addresses practical aspects of the MF-LMM such as a Black '76-type formula for a given measure flow, calibration and change to the spot measure. Section~\ref{sec:numerics} contains the numerical study and results, which show how the model can be used to reduce explosion. The effects on cap and swaption prices are studied, and it is shown that these can be (approximately) preserved by a judicial choice of the mean-field component.
Section~\ref{Sec:WEll} deals with existence and uniqueness of the solution to the underlying mean-filed SDE.

\subsection{Notation}\label{sec:prel}
For the remainder of the manuscript, we resort to the following terms and definitions:

\begin{itemize}
\item $(\Omega,\mathcal{F},(\mathcal{F}_t)_{t \geq 0},\mathbb{P})$ denotes a filtered probability space satisfying the usual conditions, see e.g., \cite{SAB}.
\item Let $(\mathbb{R}^d,\left \langle \cdot,\cdot \right \rangle, |\cdot|)$ be $d$-dimensional ($d \geq 1$) Euclidean space. As a matrix-norm, we use the Hilbert-Schmidt norm denoted by $\| \cdot \|$.
\item 
Let $\mathscr{P}(\mathbb{R}^d)$ to denote the family of all probability measures on $\mathbb{R}^d$ and define the subset of probability measures with finite second moment by
\begin{equation*}
    \mathscr{P}_2(\mathbb{R}^d)
    := 
    \Big \{ \mu\in \mathscr{P}(\mathbb{R}^d) : 
    \int_{\mathbb{R}^d} |x|^2 \mu(\mathrm{d} x) < \infty 
    \Big \}.
\end{equation*} 
\end{itemize}

\section{The mean-field LIBOR market model}\label{sec:model}
We start with the recapitulation of standard LIBOR market models:
The fair price  at time $t$ of a zero-coupon bond paying $1$ unit of currency at expiry date $T\ge t$ will be denoted by $P(t,T)$. 
We fix a tenor structure $0 \leq t_0 < t_1 < \ldots < t_N$, and remark that $t_N$ may be large.  
The $i$-th forward LIBOR rate (i.e., the rate valid on $[t_{i-1},t_i]$) at time $t \le t_{i-1}$ is defined as
\begin{equation*}
    L_t^{i}
    := 
    \frac{1}{\delta_i} 
    \frac{P(t,t_{i-1}) - P(t,t_i)}{P(t,t_i)}\,.
\end{equation*} 
where we assume accrual periods $\delta_i = t_i - t_{i-1}$.

We follow the backward induction approach (as e.g. presented in \cite{MR97}) to define the LIBOR dynamics, see also \cite{MR09,Fil}.
We therefore employ the following standard postulates:  
\begin{enumerate}
    \item[(1)] 
    The initial term structure $P(0,t_i)$ is positive and non-increasing, thus $L^i_0\ge0$ for all $i=1,\dots,N$. 
    \item[(2)]
    For each index $i=1,\dots,N$, there is an $\R^d$-valued volatility $\sigma_i$, a  forward measure $\mathcal{Q}^i$ and a corresponding $d$-dimensional Brownian motion $W^i$ ($d\ge1$), such that the dynamics of $L^i$ under $\mathcal{Q}^i$ is given by 
    \begin{align}
    \label{e:LMM}
        \mathrm{d}L_t^{i} 
        = L_t^{i}  \sigma_i^{\top} \di W^{i}_t.
    \end{align}
    \item[(3)]
    The Radon-Nikodym derivatives of the forward measures are naturally given by
    \begin{equation}
        \label{e:cons}
        \frac{d\mathcal{Q}^{i-1}}{d\mathcal{Q}^i}
        = \mathcal{E}_{t_{i-1}}
        \left( \int_{0}^{\cdot}
        \frac{\delta_i L^i_s}{\delta_i L^i_s + 1} \sigma_i^{\top}(s) \, \di W^i_s
        \right)\,.
    \end{equation} 
\end{enumerate}
Note that the $\mathcal{Q}^i$, for $i<N$, are fixed by backward induction starting from the terminal measure $\mathcal{Q}^N$.  

In the classical LIBOR market model the volatility structures are deterministic functions of time, that is $\sigma_i = \sigma_i(t)\in\R^d$ for $t\in[0,t_{i-1}]$. 
We lift this construction to the mean-field setting. 
Therefore, we choose
\begin{equation}
    \label{e:LMM-mf}
    \sigma_i = \sigma_i(t,\mu_{t}^{i})
\end{equation}
where $\mu_{t}^{i} = \text{Law}(L_t^{i})$ is the law of $L_
t^i$ under $\mathcal{Q}^i$. 
Consequently, the process given by \eqref{e:LMM} needs to be formulated as a mean-field SDE, see e.g.\ \cite{CD}. In order to obtain a well-formulated and applicable model, we have to derive the following three results: 
\begin{enumerate}
    \item[(1)]
    The mean-field version of the SDE \eqref{e:LMM} is well-posed.
    \item[(2)]
    The mean-field version of the SDE \eqref{e:LMM} can be transformed (for all $i\le N$) to a mean-field SDE under the terminal measure $\mathcal{Q}^N$.
    \item[(3)]
    The model should imply a Black '76-type formula for cap prices. 
\end{enumerate}
The first two theoretical items are proved in Theorem~\ref{TH:TH1} and the third practical item is shown in Theorem~\ref{thm:cap}.
 

\subsection{Specific volatility structure}\label{sec:vol_str}

For answering (1) and (2), one clearly needs to give the volatility~\eqref{e:LMM-mf} some specific structure, to be in line with the theoretical results obtained in Section~\ref{Sec:WEll}. Exemplary choices may depend on the variance of the LIBOR rates, e.g. we consider volatility structures of the form
\begin{equation}
    \label{e:vol_gen}
    \sigma_i(t,\mu_t^i) = \lambda^i(t,\var_{\mathcal{Q}^i}[L_t^{i}])
\end{equation}
where $\var_{\mathcal{Q}^i}[L_t^{i}]$ denotes the variance of $L_t^i$ under $\mathcal{Q}^i$ and $\lambda^i: [0,t_{i-1}]\times\R^+\to\R^d$ is a deterministic function.

In order to incorporate a dampening effect, we will assume in the numerical study in Section~\ref{sec:numerics} that
\begin{equation}
\label{eq:diffusion}
    \sigma_i(t,\mu_{t}^{i}) 
    = 
    \sigma_i^{(1)}(t) 
    \exp\Big(
    -\max\left\{\var_{\mathcal{Q}^i}[L_t^{i}] - \tilde{\sigma},0\right\}
    \Big),
\end{equation}
where $\sigma_i^{(1)}$ is modeled as a bounded function $\sigma_i^{(1)}: [0,t_{i-1}] \to \RR^d$. As in the classical model, this deterministic component is calibrated to market data, such as caplet prices. $\tilde{\sigma}>0$ represents a threshold volatility, motivated from historical data. 
This means that the volatility is tamed if $\var_{\mathcal{Q}^i}[L_t^{i}]$ increases beyond the threshold. We emphasize that our general existence and uniqueness result, Theorem~\ref{TH:TH1} covers this chosen parametrization; compare in particular Remark~\ref{rem:max}.

\subsection{Existence and uniqueness of the MF-LMM}
The existence and uniqueness of the classical LIBOR market model  is based on a backward induction argument. 
Therefore, we derive the dynamics of the $i$-th LIBOR rate under the terminal forward measure $\mathcal{Q}^N$ in the form
\begin{align}
\label{e:LMM3}
    \mathrm{d}L_t^{i} 
    = 
    L_t^{i}\Big( 
        b_i(t,L_t,\tilde{\mu}_{t}^{i}) \,\mathrm{d}t 
        + \tilde{\sigma}_i(t,\tilde{\mu}_t^{i})\,\mathrm{d}W^{N}_t \Big)
\end{align}
for $0 \leq t < t_{i-1}$.  
In this formula, 
$\tilde{\mu}_t^{i}$ is the law under $\mathcal{Q}^N$ of a process that is specified below. The drift $b_i$ and the volatility structure, $\tilde{\sigma}$, are also explicitly derived below, see equation~\eqref{e:LIBORRates}.
The occurrence of $L_t$ in $b_i$ indicates that, under $\mathcal{Q}^N$, the dynamics of $L^{i}$ will depend also  on the other LIBOR rates $L^j$, for $j \ge i$. 
  
Applying Girsanov's theorem, together with \eqref{e:cons}, implies
\begin{equation}
\label{e:girs-i}
    \di W^{i}_t 
    = \di W^{N}_t - \sum_{k=i+1}^{N} \frac{\delta_k L_t^k}{1 +\delta_k L_t^k} \sigma_k(t,\mu_t^k) \di t \,
\end{equation}
where $\sigma_k$ is given by \eqref{e:LMM-mf}. 
However, here the $\mu_t^k$ are still the laws of $L_t^k$ under $\mathcal{Q}^k$ and not $\mathcal{Q}^N$. 

When switching from $\mathcal{Q}^i$ to $\mathcal{Q}^N$ we need to consider the distribution of $L^k$ for $i < k \leq N$ under $\mathcal{Q}^N$. Thus, in the specific situation of \eqref{e:vol_gen}, the variances have to be calculated with respect to $\mathcal{Q}^N$:
\begin{align}
\label{e:V-trnsf}
    \var_{\mathcal{Q}^i}[L_t^{i}]
    = 
   \mathbb  E_{\mathcal{Q}^i}\Big[\Big(
        L_t^i - \mathbb E_{\mathcal{Q}^i}[L_t^i]
        \Big)^2\Big]  
    = 
    \mathbb E_{\mathcal{Q}^N}\Big[\Big(
     L_t^i - \mathbb E_{\mathcal{Q}^N}[L_t^i Z_t^{i,N}] \Big)^2 Z_t^{i,N} \Big] 
    =:
    \Psi_t^{i,N}\,,
\end{align}
where  
\begin{equation}
    Z_t^{i,N}
    :=
    \frac{\mathrm{d}\mathcal{Q}^i}{\mathrm{d}\mathcal{Q}^N}\Big|_{\mathcal{F}_t}
    = 
    \mathbb E_{\mathcal{Q}^N}\Big[ 
        \frac{\mathrm{d}\mathcal{Q}^i}{\mathrm{d}\mathcal{Q}^N}\Big| \mathcal{F}_t
        \Big].
\end{equation}
To this end, we start with $i=N-1$ and obtain from \eqref{e:cons} the specific form 
\[
    Z_t^{N-1,N}
    =
    \frac{\mathrm{d}\mathcal{Q}^{N-1}}{\mathrm{d}\mathcal{Q}^N}\Big|_{\mathcal{F}_t}
    = \mathcal{E}_{t}
    \left( \int_0^{\cdot}
    \frac{\delta_N L^N_s}{\delta_N L^N_s + 1}\sigma^N(s,\mu_s^N)^{\top} \di W^N_s 
    \right) \,.
\]
To show the difference to the classical situation, we look at $i = N - 2$ and obtain
\[
 Z^{N-2,N}_t 
 = 
 \frac{\mathrm{d}\mathcal{Q}^{N-2}}{\mathrm{d}\mathcal{Q}^{N-1}}\frac{\mathrm{d}\mathcal{Q}^{N-1}}{\mathrm{d}\mathcal{Q}^{N}} \Big|_{\mathcal{F}_t} 
 = 
 Z^{N-1,N}_t \frac{\mathrm{d}\mathcal{Q}^{N-2}}{\mathrm{d}\mathcal{Q}^{N-1}} \Big|_{\mathcal{F}_t}  
 =: Z^{N-1,N}_t Z^{N-2,N-1}_t  \,.
\] 
Consequently, the dynamics of $Z^{N-2,N}_t$ are given by 
\begin{align*}
\mathrm{d} Z^{N-2,N}_t 
&= 
Z^{N-1,N}_t \mathrm{d}Z^{N-2,N-1}_t 
    + Z^{N-2,N-1}_t \mathrm{d}Z^{N-1,N}_t  
    + \mathrm{d}[Z^{N-1,N}_t,Z^{N-2,N-1}_t] \\
&= 
Z^{N-1,N}_t Z^{N-2,N-1}_t \frac{\delta_{N-1} L_t^{N-1}}{1 + \delta_{N-1} L_t^{N-1}} \sigma_{N-1}(t,\mu_t^{N-1})^{\top} \mathrm{d}W_t^{N-1}  \\
& \quad\quad 
    + Z^{N-2,N-1}_tZ^{N-1,N}_t \frac{\delta_{N} L_t^{N}}{1 + \delta_{N} L_t^{N}} \sigma_N(t,\mu_t^{N})^{\top} \mathrm{d}W_t^{N}  
    + \mathrm{d}[Z^{N-1,N}_t,Z^{N-2,N-1}_t].
\end{align*}
Thus, in order to obtain the distribution of $L^{N-2}$ under $\mathcal{Q}^N$, we also need to consider the distribution of $Z^{N-2,N}$ and $Z^{N-1,N}$. The same procedure applies to $L^{N-j}$, $j=3,\ldots,N-1$.

To do so, let $\tilde{\mu}_t^{N-1}$ denote the joint law of $(L_t^{N-1},Z_t^{N-1,N})$ under $\mathcal{Q}^N$. Then it follows that 
\[
 \var_{Q_{N-1}}[L_t^{N-1}]
 = \Psi_t^{N-1,N}
 = \Psi^{N-1}(\tilde{\mu}_t^{N-1})
\]
where $\Psi^{N-1}$ is now a map $\Psi^{N-1}: \mathcal{P}_2(\mathbb{R}^2)\to\mathbb{R}$. 
Therefore, we can define a new volatility coefficient 
\begin{equation}
    \label{e:sigma-t}
    \tilde{\sigma}_{N-1}(t,\tilde{\mu}_t^{N-1})
    := \lambda^{N-1}(t, \Psi^{N-1}(\tilde{\mu}^{N-1}_t))
\end{equation}
which replaces $\sigma_{N-1}(t,\mu_t^{N-1})$ but, crucially, depends on the law $\tilde{\mu}_t^{N-1}$ under $\mathcal{Q}^N$.   

Substituting $\sigma_{N-1}(t,\mu_t^{N-1})$ by the coefficient $\tilde{\sigma}_{N-1}(t,\tilde{\mu}_t^{N-1})$, and using the relation \eqref{e:girs-i} between $W_t^{N}$ and $W_t^{N-1}$, yields
\begin{align*}
    \mathrm{d}Z^{N-2,N}_t 
    &= 
    Z^{N-2,N}_t \frac{\delta_{N-1} L_t^{N-1}}{1 + \delta_{N-1} L_t^{N-1}} \tilde{\sigma}_{N-1}(t,\tilde{\mu}_t^{N-1}) \left( \mathrm{d}W^{N}_t - \frac{\delta_N L_t^N}{1 + \delta_N L_t^N} \sigma_N(t,\mu_t^N)^{\top} \mathrm{d}t \right)  \\
    &\quad 
    + Z^{N-2,N}_t \frac{\delta_N L_t^{N}}{1 + \delta_N L_t^{N}} \sigma_N(t,\mu_t^{N})^{\top} \mathrm{d}W_t^{N} \\
    &\quad 
    + Z^{N-2,N}_t  \frac{\delta_N L_t^N}{1 + \delta_N L_t^N} \sigma_N(t,\mu_t^N)^{\top}\frac{\delta_{N-1} L_t^{N-1}}{1 + \delta_{N-1} L_t^{N-1}} \tilde{\sigma}_{N-1}(t,\tilde{\mu}_t^{N-1})  \mathrm{d}t \\
    &= 
    Z^{N-2,N}_t \frac{\delta_{N-1} L_t^{N-1}}{1 + \delta_{N-1} L_t^{N-1}} \tilde{\sigma}_{N-1}(t,\tilde{\mu}_t^{N-1})^{\top}  \mathrm{d}W^{N}_t \\
    &\quad 
    + Z^{N-2,N}_t \frac{\delta_N L_t^{N}}{1 + \delta_N L_t^{N}} \sigma_N(t,\mu_t^{N})^{\top} \mathrm{d}W_t^{N}.
\end{align*}
Therefore, we have found the SDE for $Z_t^{N-2,N}$ under the terminal measure and with coefficients which depend on the laws $\tilde{\mu}_t^{N-1}$ and $\mu_t^N = \tilde{\mu}_t^N$ also under the terminal measure. 

For $i<N-2$ we proceed analogously. Thus, let $\tilde{\mu}_t^i$ denote the joint law of $(L_t^i,Z_t^{i,N},\dots,Z_t^{N-1,N})$ under $\mathcal{Q}^N$. It follows that 
\[
 \var_{Q_i}[L_t^i]
 = \Psi_t^{i,N}
 = \Psi^i(\tilde{\mu}_t^i) \,,
\]
where $\Psi^i$ is now a map $\Psi^i: \mathcal{P}_2(\mathbb{R}^{N-i+1})\to\mathbb{R}$. 
Invoking \eqref{e:vol_gen}, we again define the new volatility coefficient 
\begin{equation}
    \label{e:sigma-t2}
    \tilde{\sigma}_i(t,\tilde{\mu}_t^i)
    := \lambda^i(t, \Psi^i(\tilde{\mu}^{i}_t))\,,
\end{equation}
which now depends on the law $\tilde{\mu}_t^i$ under the terminal measure $\mathcal{Q}^N$.

In total we arrive at the system
\begin{equation}
\left(
\begin{array}{c}
\mathrm{d}L_t^{i} \\[7pt]
\mathrm{d}Z^{N-1,N}_t \\[7pt]
\vdots \\[7pt]
\mathrm{d}Z^{i,N}_t
\end{array}
\right)
= 
\left(
\begin{array}{c}
L_t^i\left(  \tilde{\sigma}_i(t,\tilde{\mu}_t^i)^{\top} \mathrm{d}W^{N}_t - \sum_{k=i+1}^{N} \frac{\delta_k L_t^k}{1 +\delta_k L_t^k} \tilde{\sigma}_k(t,\tilde{\mu}_t^k)^{\top} \tilde{\sigma}_i(t,\tilde{\mu}_t^i) \mathrm{d}t \right) \\[7pt]
Z^{N-1,N}_t \frac{\delta_N L_t^{N}}{1 + \delta_N L_t^{N}} \sigma_N(t,\mu_t^{N})^{\top} \mathrm{d}W_t^{N} \\[7pt]
\vdots  \\[7pt]
Z^{i,N}_t \sum_{q=0}^{N-i-1} \frac{\delta_{N-q} L_t^{N-q}}{1 + \delta_{N-q} L_t^{N-q}} \tilde{\sigma}_{N-q}(t,\tilde{\mu}_t^{N-q})^{\top} \mathrm{d}W_t^{N} 
\end{array}
\right)\,.
\end{equation}

We now state our main result on the well-posedness of the mean field LIBOR market model (MF-LMM).

\begin{thm}[Existence of MF-LMM]\label{TH:TH1}
Let the volatility coefficients $\sigma_i: [0,t_{i-1}] \times \mathscr{P}_2(\RR) \to \RR^d$ be of the form
\begin{equation*}
    \sigma_i(t,\mu_{t}^{i}) 
    = 
    \sigma_i^{(1)}(t) 
    \exp\Big(
    -\max\left\{\var_{\mathcal{Q}^i}[L_t^{i}] - \tilde{\sigma},0\right\}
    \Big),
\end{equation*}
where $\mu^i$ denotes the law of $L^i$, and satisfy the following assumptions:
\begin{enumerate} 
    \item[(1)] There exists a constant $L>0$ such that
    \begin{equation*}
        | \sigma_i(t,\mu)| \leq L, \quad \forall \ t \in [0,t_{i-1}], \ \forall \ \mu \in \mathscr{P}_2(\RR).
    \end{equation*}
    \item[(2)] 
    For any $t\in[0,t_{i-1}]$, $\sigma(t,\cdot)$ satisfies assumption ({\bf A}$_{b\sigma}^1$) of Section~\ref{Sec:WEll} with a constant independent of~$t$. 
\end{enumerate}
Then the functions $\tilde{\sigma}_i: [0,t_{i-1}] \times \mathscr{P}_2(\RR^{N-i+1}) \to \RR$
defined by \eqref{e:sigma-t2} satisfy these assumptions as-well. 

Let $\mathcal{Q}^N$  be a probability measure and $W^N$ an adapted $d$-dimensional Brownian motion $W^{N}$. Then, the mean-field SDE 
\begin{equation}
\label{e:LIBORRates1}
    \mathrm{d}L_t^{N} 
    = L_t^{N}
        \tilde{\sigma}_N(t,\tilde{\mu}_t^{N})^{\top} 
        \mathrm{d}W^{N}_t , 
\end{equation}  
with $t\in[0,t_{N-1}]$, has a unique strong solution $L^N$.
Moreover, for $i < N$, the mean-field SDE 
\begin{align}
\label{e:LIBORRates}
\left(
\begin{matrix}
    \mathrm{d}L_t^{i} \\[7pt]
    \mathrm{d}Z^{N-1,N}_t \\[7pt]
    \vdots
    \\[7pt]
    \mathrm{d}Z^{i,N}_t
\end{matrix}
\right)
= 
\left(
\begin{matrix}
    L_t^i\left(  \tilde{\sigma}_i(t,\tilde{\mu}_t^i)^{\top} \mathrm{d}W^{N}_t - \sum_{k=i+1}^{N} \frac{\delta_k L_t^k}{1 +\delta_k L_t^k} \tilde{\sigma}_k(t,\tilde{\mu}_t^k)^{\top} \tilde{\sigma}_i(t,\tilde{\mu}_t^i) \mathrm{d}t \right) \\[7pt]
Z^{N-1,N}_t \frac{\delta_N L_t^{N}}{1 + \delta_N L_t^{N}} \tilde{\sigma}_N(t,\mu_t^{N})^{\top} \mathrm{d}W_t^{N} \\[7pt]
\vdots  \\[7pt]
Z^{i,N}_t \sum_{q=0}^{N-i-1} \frac{\delta_{N-q} L_t^{N-q}}{1 + \delta_{N-q} L_t^{N-q}} \tilde{\sigma}_{N-q}(t,\tilde{\mu}_t^{N-q})^{\top} \mathrm{d}W_t^{N} 
\end{matrix}
\right)
\end{align}
has a unique strong solution $(L^i,Z^{i,N}, \ldots, Z^{N-1,N})$,
where $\tilde{\mu}_t^i$ is the joint law of $(L_t^i,Z^{i,N}_t, \ldots, Z^{N-1,N}_t)$. 
Under the forward measure $\mathcal{Q}^i$, the martingale representation
\begin{equation}
\label{e:MRep}
    \mathrm{d}L_t^{i} 
    = L_t^{i} \sigma_i(t,\mu_t^{i})^{\top}\mathrm{d}W^{i}_t
\end{equation} 
holds, where $W^{i}$ is a $\mathcal{Q}^i$-Brownian motion in $\mathbb{R}^d$ and $\mu_t^{i}$ is the law of $L_t^{i}$ under $\mathcal{Q}^i$.
\end{thm}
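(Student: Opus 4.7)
The overall plan is a backward induction on $i$ from $N$ down to $1$, at each step reducing the claim to the abstract McKean-Vlasov well-posedness result of Section~\ref{Sec:WEll} under its hypothesis ({\bf A}$_{b\sigma}^1$). As a preliminary I verify that the induced coefficients $\tilde\sigma_i$ of \eqref{e:sigma-t2} inherit the two stated hypotheses from $\sigma_i$. The functional $\Psi^i:\mathscr{P}_2(\R^{N-i+1})\to\R$ is a polynomial expression in the first, second, and third joint moments of $(L^i,Z^{i,N},\dots,Z^{N-1,N})$, coming from the change-of-measure formula \eqref{e:V-trnsf}; composition with the globally bounded, $1$-Lipschitz wrapper $v\mapsto\exp(-\max\{v-\tilde\sigma,0\})$ and multiplication by the bounded deterministic factor $\sigma_i^{(1)}(t)$ therefore produces $\tilde\sigma_i$ that is uniformly bounded by $L:=\|\sigma_i^{(1)}\|_\infty$ and Lipschitz in the measure argument in the sense required by ({\bf A}$_{b\sigma}^1$). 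The non-smoothness of $\max$ is handled as in Remark~\ref{rem:max}.

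For the base case $i=N$, the SDE \eqref{e:LIBORRates1} is one-dimensional with zero drift and bounded Lipschitz diffusion coefficient $L\mapsto L\,\tilde\sigma_N(t,\mu)$; an application of the abstract theorem produces a unique nonnegative strong solution $L^N$, which is a $\mathcal{Q}^N$-martingale by the exponential form of its solution. For the inductive step $i<N$, assume that $L^{i+1},\dots,L^N$ and $Z^{i+1,N},\dots,Z^{N-1,N}$ have already been constructed and are nonnegative. The right-hand side of \eqref{e:LIBORRates} is then an $(N-i+1)$-dimensional mean-field SDE whose coefficients are bounded because each ratio $\delta_k L^k_t/(1+\delta_k L^k_t)\in[0,1)$ under $L^k\ge0$ and each $\tilde\sigma_k$ is bounded by the preliminary step. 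A second application of the abstract theorem yields a unique strong solution $(L^i,Z^{i,N},\dots,Z^{N-1,N})$; the geometric form of the $L^i$-equation and the Doléans--Dade form of the $Z^{\cdot,N}$-equations guarantee nonnegativity (indeed positivity when $L^i_0>0$), closing the induction. The identity $Z^{i,N}=Z^{i,i+1}\,Z^{i+1,N}$ is consistent with \eqref{e:cons}.

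For the concluding martingale representation \eqref{e:MRep} I apply Girsanov's theorem to the density $Z_t^{i,N}=\mathrm{d}\mathcal{Q}^i/\mathrm{d}\mathcal{Q}^N|_{\mathcal{F}_t}$ just constructed: using the quadratic covariation of $W^N$ with $\log Z^{i,N}$ read off from the last block of \eqref{e:LIBORRates}, one recovers exactly the drift shift of \eqref{e:girs-i}, so that $W^i$ defined by $\mathrm{d}W^i_t=\mathrm{d}W^N_t-\sum_{k=i+1}^{N}\frac{\delta_k L^k_t}{1+\delta_k L^k_t}\tilde\sigma_k(t,\tilde\mu_t^k)\,\mathrm{d}t$ is a $\mathcal{Q}^i$-Brownian motion. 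Substituting this identity into the $L^i$-equation of \eqref{e:LIBORRates} cancels the drift by design and leaves $\mathrm{d}L_t^i=L_t^i\,\sigma_i(t,\mu_t^i)^\top \mathrm{d}W_t^i$, proving \eqref{e:MRep}. The main technical obstacle is the verification that the variance-type functional $\Psi^i$ composed with the exponential-max wrapper satisfies the Lipschitz-in-Wasserstein estimate required by ({\bf A}$_{b\sigma}^1$); the boundedness of $\tilde\sigma_i$ and the multiplicative structure of the LIBOR equations are what ultimately make the moment bounds along the induction self-contained.
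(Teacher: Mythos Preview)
Your proposal is correct and follows essentially the same approach as the paper: backward induction on $i$, reduction at each step to the abstract well-posedness result of Section~\ref{Sec:WEll} via the boundedness of $\tilde\sigma_i$ and of the ratios $\delta_k L^k/(1+\delta_k L^k)$, and Girsanov's theorem to obtain the martingale representation \eqref{e:MRep}. The paper additionally cites \cite[Lemma~A.2]{BMBP18} to ensure that the $Z^{i,N}$ are true martingales so that the measure change is legitimate, a point you handle implicitly through boundedness of the stochastic-exponential integrand.
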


\begin{proof}
We employ an inductive argument to prove well-posedness of the equations defined in \eqref{e:LIBORRates}. We start with $i=N$ and consider the mean-filed SDE
\begin{equation*}
\mathrm{d}L_t^{N} = L_t^{N} \sigma_N(t,\mu_{t}^{N})^{\top}\mathrm{d}W^{N}_t,
\end{equation*}
which, as shown in Theorem~\ref{TH:TH2}, has a unique strong (non-negative) solution, since it is a special case of the equation considered in Section \ref{Sec:WEll}. 

Assume that we have shown well-posedness for $i=k+1$. For $i=k$, we observe that the drift of
\begin{equation*}
\mathrm{d}L_t^{k} 
= 
 L_t^{k}\left( \tilde{\sigma}_k(t,\tilde{\mu}_t^{k})^{\top} \mathrm{d}W^{N}_t -  \sum_{j=k+1}^{N} \frac{\delta_j L_t^{j}}{1 +\delta_j L_t^{j}} \tilde{\sigma}_j(t,\tilde{\mu}_t^{j})^{\top} \tilde{\sigma}_k(t,\tilde{\mu}_t^{k}) \mathrm{d}t  \right), 
\end{equation*} 
depends on LIBOR rates with index larger than $k$, which, by induction hypothesis, have a  non-negative, unique strong solution. Furthermore, due to the boundedness of $\tilde{\sigma}$ (which follows from the boundedness of $\sigma)$, there exists a constant $C >0$ (independent of $t$ and the measure) such that
\begin{equation*}
\left| \sum_{j=k+1}^{N} \frac{\delta_j L_t^{j}}{1 +\delta_j L_t^{j}} \tilde{\sigma}_j(t,\tilde{\mu}_t^{j})^{\top} \tilde{\sigma}_k(t,\tilde{\mu}_t^{k}) \right| \leq C.
\end{equation*}
Therefore, Girsanov's theorem as stated in \cite[Lemma A.2]{BMBP18}, is applicable.\footnote{This also implies that all investigated change of measure intensities $Z^{i,N}$ for $i=1,\ldots,N-1$ are real martingales.}

Also the coefficients of each of the processes $(Z_t^{i})_{0 \leq t \leq T}$ only depend on LIBOR rates with index larger than $k$ and grow linearly in $Z^{i}$. Similar arguments to the ones employed in Section \ref{Sec:WEll} yield the claim, as the process $(L_t^i,Z^{i,N}_t, \ldots, Z^{N-1,N}_t)_{0 \leq t \leq T}$ can be identified with a $(N-i+1)$-dimensional process $(X_t)_{0 \leq t \leq T}$; See Remark~\ref{rem:ass}.
\end{proof}

\begin{remark}[Connection to Backward-Looking Rates (BLR)]
As noted in the introduction, our approach shares common features with the approach on backward-looking rates. This literature strand adapts classical forward-looking rates to the new concept of in arrear backward-looking rates while at the same time incorporating a deterministic (tenor-dependent) dampening effect at the respective tenor dates, see in eq.~(13) in \cite{LM19}, respectively eq.~(5) in \cite{LM19_}. We wish to stress that the MF-LMM approach and the BLR approach differ fundamentally, in particular:
\begin{itemize}
    \item \cite{LM19,LM19_} focus on extending traditional forward-looking rates (like the LIBOR) to encompass the new setting-in-arrears backward-looking rates, while staying compatible with these traditional rates. 
    \item The MF-LMM applies a distribution-dependent dampening in order to prevent explosion of the LIBOR rates, whereas the BLR approach uses a deterministic time-dependent dampening.
\end{itemize}
Additionally, as a result of the compatibility with traditional forward-looking (LIBOR) rates, our MF-LMM is as well consistent with backward-looking rates. We leave this topic for future research.
\end{remark}

\begin{remark}[Nonlinear diffusion in the sense of McKean] 
Equation \eqref{e:LIBORRates} is, viewed individually, a mean-field SDE only if one considers the input from other distributions as an additional time-dependence and allows for random coefficients. Indeed, for $i<N$, the coefficients in \eqref{e:LIBORRates} depend on distributions $\tilde{\mu}^j$ and rates $L^j$ with $i<j\le N$.  
By contrast, equation~\eqref{e:MRep} is a mean-field SDE for each $i$.  
Moreover, equations~\eqref{e:LIBORRates1} and \eqref{e:LIBORRates} can be reformulated as components of the mean-field SDE \eqref{e:MF} (with non-random coefficients):

With the same assumptions and notation as in Theorem~\ref{TH:TH1}, the $\mathbb{R}^{2N-1}$-valued process 
\[
 M = (L^1,\dots,L^N,Z^{1,N},\dots,Z^{N-1,N})\,,
\]
satisfies the mean-field equation
\begin{equation}
\label{e:MF}
     \mathrm{d}M_t
     = \tilde{b}(t,M_t,\tilde{\mu}_t)\di t
       + \tilde{\sigma}(t,\tilde{\mu}_t)^{\top} 
        \mathrm{d}W^{N}_t \,,
\end{equation}
where $\tilde{\mu}_t$ is the law of $M_t$ under $\mathcal{Q}_N$. Then, the components of this equation are defined as follows:

For the collection $(L^j,Z^{j,N},\ldots,Z^{N-1,N})$ for $j=1,\ldots,N-1$ let $m_{(L^j,Z^{j,N},\ldots,Z^{N-1,N})}: \mathscr{P}_2(\RR^{2N-1}) \to\mathscr{P}_2(\RR^{N-j+1})$ be the projection onto the corresponding marginal which corresponds to the joint law of $(L^j,Z^{j,N},\ldots,Z^{N-1,N})$ under $\mathcal{Q}^N$.
Consider
\begin{equation}
\label{e:LR1}
    \mathrm{d}L_t^{N} = L_t^{N}
        \tilde{\sigma}_N(t,\tilde{\mu}_t^{N})^{\top} 
        \mathrm{d}W^{N}_t , 
\end{equation}  
where $0 \leq t < t_{N-1}$ and $\tilde{\mu}_t^N = \mu_t^N = m_{L^N}(\tilde{\mu}_t)$ is the law of $L^{N}_t$.
Furthermore, for $i < N$, consider
\begin{align}
\label{e:LR2}
    \mathrm{d}L_t^{i}
    &=
     L_t^i\left(  \tilde{\sigma}_i(t,\tilde{\mu}_t^i)^{\top} \mathrm{d}W^{N}_t - \sum_{k=i+1}^{N} \frac{\delta_k L_t^k}{1 +\delta_k L_t^k} \tilde{\sigma}_k(t,\tilde{\mu}_t^k)^{\top} \tilde{\sigma}_i(t,\tilde{\mu}_t^i) \mathrm{d}t \right)\,,
    \\
\label{e:LR3}
    \mathrm{d}Z^{i,N}_t
    &=
    Z^{i,N}_t \sum_{q=0}^{N-i-1} \frac{\delta_{N-q} L_t^{N-q}}{1 + \delta_{N-q} L_t^{N-q}} \tilde{\sigma}_{N-q}(t,\tilde{\mu}_t^{N-q})^{\top} \mathrm{d}W_t^{N} \,,
\end{align}
where 
$\tilde{\mu}_t^i 
= m_{(L^i,Z^{i,N},\ldots,Z^{N-1,N})}(\tilde{\mu}_t)$ is the joint law of $(L_t^i,Z^{i,N}_t, \ldots, Z^{N-1,N}_t)$.
\end{remark}

\section{Practical aspects of the MF-LMM}\label{SEC:Cali}

\subsection{A mean-field cap formula}

In order to show that our framework is consistent with classical LIBOR market models, we also provide a Black '76-type cap pricing formula for a given measure flow along the lines of \cite{Black76}, cast into our mean-field framework:
\begin{thm}\label{thm:cap}
Let $t_1 < \ldots < t_N$ be a given tenor structure. We assume that the LIBOR rates $L_t^{i}$, for $t < t_{i-1}$ on the time-interval $[t_{i-1},t_i]$, associated to this tenor structure follow the evolution specified by equation (\ref{e:LMM}) and that $\{\mu_s^i\,\vert\,0\leq s\leq t_{i-1}\}$ are given distributions in $\mathcal{P}_2(\R)$ (for each $s$ and $i$).
In the sequel, $K$ and $V$ will denote the strike and nominal value, respectively. Then, we have that:
\begin{enumerate}
\item The price $C_i(t,\sigma_i(\cdot,\mu_{\cdot}^{i}))$ of a caplet with expiry date $t_i$ and payoff $\delta_i \cdot (L_{t_{i-1}}^{i} -K)^{+}$ is determined by
\begin{align*}
& C_i(t,\sigma_i(\cdot,\mu_{\cdot}^{i})) = \delta_i P(t,t_i) \left[ L_t^{i} \Phi(d_t^{1}) -K \Phi(d_t^{2})  \right], \\
& d_t^{1} = \frac{\log\left( \frac{L_{t}^{i}}{K} \right) + \frac{1}{2} (\bar{\sigma}_t^{i})^2}{\bar{\sigma}_t^{i}}, \quad d_t^{2} = d_t^{1} - \bar{\sigma}_t^{i}, \\
& (\bar{\sigma}_t^{i})^2 = \int_{t}^{t_{i-1}} \left(\sigma_i(s,\mu_{s}^{i}) \right)^2 \mathrm{d}s.
\end{align*}
\item The price $Cap_{FL}(t;V,K)$ of a cap, consisting of caplets with expiry dates $t_1 < \ldots < t_N$ such that $t < t_1$, in the MF-LMM is given by
\begin{equation*}
Cap_{FL}(t;V,K) = V \sum_{i=1}^{N} C_i(t,\sigma_i(\cdot,\mu_{\cdot}^{i})). 
\end{equation*}
\end{enumerate}
\end{thm}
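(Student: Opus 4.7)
The key observation that drives the whole proof is that the measure flow $\{\mu_s^i\,\vert\,0\le s\le t_{i-1}\}$ is given externally (i.e.\ not part of the unknown), so that for each $i$ the map $s\mapsto\sigma_i(s,\mu_s^i)$ becomes a \emph{deterministic} function of time. Under the forward measure $\mathcal{Q}^i$ the dynamics \eqref{e:MRep} therefore reduce to a geometric Brownian motion with deterministic (vector-valued) volatility. This collapses the mean-field problem to the classical Black~'76 setting and the rest of the proof is essentially bookkeeping.

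The plan for part (1) is as follows. First, I would invoke the fundamental theorem of asset pricing under the forward measure $\mathcal{Q}^i$ associated with the numéraire $P(\cdot,t_i)$. Using that $\delta_i(L_{t_{i-1}}^i-K)^+$ is paid at $t_i$ and that $L^i$ is an $\mathcal{Q}^i$-martingale, I get
\begin{equation*}
    C_i(t,\sigma_i(\cdot,\mu_\cdot^i))
    = \delta_i P(t,t_i)\,
    \mathbb{E}_{\mathcal{Q}^i}\!\left[(L_{t_{i-1}}^i-K)^+\,\big|\,\mathcal{F}_t\right].
\end{equation*}
Second, solving the SDE \eqref{e:MRep} explicitly via Itô's formula and the deterministic-volatility observation yields
\begin{equation*}
    L_{t_{i-1}}^i
    = L_t^i \exp\!\Big(
        -\tfrac{1}{2}\!\int_t^{t_{i-1}}\!|\sigma_i(s,\mu_s^i)|^2\di s
        + \!\int_t^{t_{i-1}}\!\sigma_i(s,\mu_s^i)^\top \di W_s^i
    \Big),
\end{equation*}
so that, conditional on $\mathcal{F}_t$, the log-return $\log(L_{t_{i-1}}^i/L_t^i)$ is Gaussian with mean $-\tfrac{1}{2}(\bar{\sigma}_t^i)^2$ and variance $(\bar{\sigma}_t^i)^2$, with $(\bar{\sigma}_t^i)^2$ as defined in the statement. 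Third, the standard Black~'76 computation — splitting the call payoff as $L_{t_{i-1}}^i\mathbf{1}_{\{L_{t_{i-1}}^i>K\}} - K\mathbf{1}_{\{L_{t_{i-1}}^i>K\}}$, handling the first term either by a measure change with density $L_{t_{i-1}}^i/L_t^i$ or by completing the square in the Gaussian integral, and the second term directly — delivers exactly the claimed formula with $\Phi(d_t^1)$ and $\Phi(d_t^2)$.

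Part (2) is then immediate by linearity: a cap is a portfolio of caplets, so its time-$t$ value (for $t<t_1$) is the sum of the individual caplet values, each of which is already priced in part~(1) and scaled by the nominal $V$. No additional measure-change argument is needed since every caplet is evaluated under its own forward measure $\mathcal{Q}^i$ and the resulting prices are expressed in units of today's currency via the zero-coupon factors $P(t,t_i)$, which already appear in the caplet formula.

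The only non-routine step is the first one, namely recognising that \emph{prescribing} the measure flow converts \eqref{e:MRep} into an SDE with deterministic volatility; everything else is the classical Black~'76 derivation. I do not anticipate a serious obstacle. The only subtlety worth double-checking is that $\sigma_i$ in the formula is a $d$-vector, so $|\sigma_i(s,\mu_s^i)|^2$ should be read as the squared Euclidean norm, which is what the Itô isometry produces in the exponent; this matches the integrand $(\sigma_i(s,\mu_s^i))^2$ appearing in $(\bar{\sigma}_t^i)^2$ in the theorem statement.
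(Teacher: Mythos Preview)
Your proposal is correct and follows essentially the same approach as the paper: recognise that a prescribed measure flow makes $s\mapsto\sigma_i(s,\mu_s^i)$ deterministic, deduce under $\mathcal{Q}^i$ the log-normality of $L_{t_{i-1}}^i$ conditional on $\mathcal{F}_t$, and then apply the standard Black~'76 computation; part~(2) is obtained by summing. If anything, your write-up is slightly more explicit than the paper's (which simply records the log-normal parameters and defers to ``standard arguments''), and your justification of part~(2) by linearity of pricing is cleaner than the paper's appeal to ``independence'', which is not actually needed for the additivity of caplet values.
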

\begin{proof}
The second item is a straightforward consequence of the first, as the individual rates are independent. Since the $i$-th LIBOR rate is modeled under $\mathcal{Q}^i$, we obtain 
\begin{align*}
C_i(t,\sigma_i(t,\mu_{t}^{i})) &= \delta_i P(t,t_i) \mathbb{E}^{\mathcal{Q}^i}\left(L_{t_{i-1}}^{i} - K \right)^{+} \\
& = \delta_i P(t,t_i) 
\mathbb{E}^{\mathcal{Q}^i}
\left(  L_{t_{i-1}}^{i} \mathrm{I}_{\{L_{t_{i-1}}^{i} >K\}}  \right) - \delta_i\, K \mathcal{Q}^i \left( L_{t_{i-1}}^{i} > K \right).
\end{align*} 
Due to (\ref{e:LMM}), we have that $L_{t_{i-1}}^{i}$ under $\mathcal{Q}^i$ is log-normal distributed with 
\begin{align*}
\log \left( L_{t_{i-1}}^{i} \right) \sim \mathcal{N} \left( \log \left( L_{t}^{i} \right) - \frac{1}{2} \int_{t}^{t_{i-1}} \left(\sigma_i(s,\mu_{s}^{i}) \right)^2 \mathrm{d}s, \int_{t}^{t_{i-1}} \left(\sigma_i(s,\mu_{s}^{i}) \right)^2 \mathrm{d}s \right).
\end{align*}
The remaining part of the proof follows now standard arguments, as in the derivation for the Black-Scholes model.  
\end{proof}
This corresponds now exactly to the market price formula in a Black-Scholes sense with an averaged volatility. In contrast to the classical LIBOR market model, we now average over measure-dependent functions for the volatility instead of purely deterministic functions. Note that this formula is also the theoretical foundation of the suggested calibration procedure in the following section. 

\subsection{Mean-field calibration}\label{sec:mfCal}

The LIBOR rates 
\begin{align*}
\mathrm{d}L_t^{i} = L_t^{i} \sigma_i(t,\mu_{t}^{i})\mathrm{d}W^{i}_t, 
\end{align*}
where $\mu_{t}^{i} = \text{Law}(L_t^{i})$ and the law is considered under the measure $\mathcal{Q}^i$, will be used for the calibration of the coefficients $\sigma_i$. Recall that the dependence of $\sigma_i$ on the measure is already specified in Theorem \ref{TH:TH1}.  

To cast the problem into a well-known setting, one possibility is to use the auxiliary model
\begin{align*}
\mathrm{d}\tilde{L}_t^{i} = \tilde{L}_t^{i} \sigma_i^{(1)}(t)\mathrm{d}W^{i}_t\,, 
\end{align*} 
which focuses on the deterministic component of \eqref{eq:diffusion}. Then one can calibrate $\sigma_i^{(1)}$ to market data using classical cap prices. Parametric approaches for the calibration of $\sigma_i^{(1)}$, explained in \cite{Brigo}, are constructed in a way such that $\sigma_i^{(1)}$ increases as $t$ approaches the time to maturity.
Clearly, a modified model with added measure dependent coefficients is not a-priori market consistent, but the numerical results from Section \ref{sec:numerics} demonstrate that for long maturities consistency is achieved. Since avoidance of explosive paths is crucial for efficient valuation of long term contracts, this approach with a distinct damping factor seems to be quite meaningful for practical considerations.

However, with the help of Theorem~\ref{thm:cap} and an iterative procedure we can also directly calibrate the mean-field LMM. To achieve this, we propose the following procedure: At first we observe that, for given distributions $\{\mu_s^i\,\vert\,0\leq s\leq t_{i-1}\}$ (which correspond to estimates for the variance of $L_s^{i}$ at time $s$), given a perfect calibration we would have that
\begin{align*}
\left(\hat{\sigma}^{\text{market}}_i \right)^2= \int_{0}^{t_{i-1}} \left(\sigma_i(s,\mu_{s}^{i}) \right)^2 \mathrm{d}s \,,
\end{align*}
where $\hat{\sigma}^{\text{market}}_i$ is the quoted implied volatility of a caplet with expiry date $t_i$.

The implemented method is inspired by the Picard-type iteration employed in the proof of Theorem \ref{TH:TH2}. We start with a given initial variance function $v^{(0)}:[0,t_{i-1}]\to\RR^+$ and use it as a substitute for the yet unknown variance of $L^i$, i.e.,
$v^{(0)}(s)\widehat{=}\var_{\mathcal{Q}^i}(L_s^i)$. Then, we further approximate 
\begin{align}\label{eq:sigmaapprox}
\left(\hat{\sigma}^{\text{market}}_i \right)^2 &=  \int_{0}^{t_{i-1}} \left(\sigma_i(s,\mu_{s}^{i}) \right)^2 \mathrm{d}s \notag \\
& \approx\sum_{j=1}^{J}  \left(\sigma_{i}^{(1)}(s_j) \right)^2 \exp\left(-2 \max\{v^{(0)}(s_j)-\tilde{\sigma},0\}/\tilde{\sigma} \right) \left( s_j - s_{j-1} \right),
\end{align}
with $0=s_0<\ldots<s_J=t_{i-1}$ for some $J\in\N$.\\
At first we focus on the deterministic component $\sigma_i^{(1)}$ of the LIBOR rates' volatility coefficient using \eqref{eq:sigmaapprox}. If we assume a parametric form of $\sigma_i^{(1)}$, we compute a first estimate of its parameters by determining: 
\begin{align*}
\mbox{argmin}\,\Big |{\left(\hat{\sigma}^{\text{market}}_i \right)^2 -  \sum_{j=1}^{J}  \left(\sigma_i^{(1)}(s_j) \right)^2 \exp\left(-2 \max\{v^{(0)}(s_j)-\tilde{\sigma},0\}/\tilde{\sigma} \right) \left( s_j - s_{j-1} \right)}\Big |_2.
\end{align*}
For example, if we use $\sigma_i^{(1)}(t)=g(t_{i-1}-t)$ with
\begin{align}\label{eq:sigma_parametric}
g(\tau)=(a+b\tau)e^{-c\tau}+d,
\end{align}
the above minimization is with respect to $\{a,b,c,d\}$. We denote the resulting first estimator by $\hat{\sigma}_i^{(1,1)}$. In a next step we need to update the variance $v^{(0)}$. 
Therefore, we run Monte Carlo simulations to generate $M\in\NN$ independent paths of the corresponding process,
\begin{align*}
\mathrm{d}L_t^{i}= L_t^{i}\hat{\sigma}_i^{(1,1)}(t)\exp\left(-\max \{v^{(0)}(t)-\tilde{\sigma},0\}/\tilde{\sigma}\right) \, \mathrm{d}W_t^i,
\end{align*}
up to the terminal time $t_{i-1}$. We derive an approximated update for the variance function  $v^{(1)}$ from the associated empirical distribution. This procedure can now be immediately iterated. Replace $v^{(0)}$ by $v^{(1)}$ for computing $\hat{\sigma}_i^{(1,2)}$ in a first step and continue until the norm $|\hat{\sigma}_i^{(1,l+1)}-\hat{\sigma}_i^{(1,l)}|$ becomes acceptably small for some $l\in\NN$.
\\
\\
In the following numerical example we demonstrate the feasibility of the above described approach. We focus on $t_i=20$ and use an equidistant grid $s_j-s_{j-1}=h=\frac{1}{30}$.
For the deterministic component of the volatility we use functions of the type $g(\tau)=(a+b\,\tau)e^{-c\,\tau}+d$, such that for $L^i$ we have $\sigma_i^{(1)}(t)=g(t_{i-1}-t)$.
Specifically, for our \emph{toy} example we choose $a=0.14,\,b=0.01,\,c=0.05,\,d=0.2$ and use a number of paths ($M=10^5$) to approximate $\mathbb{E}^{\mathcal{Q}_i}[\max\{L_{t_i-h}-K),0\}]$. The corresponding results for the calibrated deterministic model are stated in Table \ref{tab:Iterations} for $k=0$ iteration steps. 

Using the results from Theorem \ref{thm:cap} with $\delta_i=1$, strike $K=L_0^i=0.02$ and implicitly setting $P(0,t_i)=1$, we can compute an associated \emph{quoted implied} volatility $\hat{\sigma}_i^{market}=1.55$. For the damping volatility we choose $\tilde{\sigma}=\frac{1.55}{20}$. This choice is motivated by uniformly distributing $\hat{\sigma}_i^{market}$ over the considered period of time.\\
Table \ref{tab:Iterations} at $k=6$ collects the results after 6 iteration steps. In general, one can say that the estimated parameters of $g$ stabilize very quickly. Thus the stated relative error is mainly due to the MC-method (remember that the stepsize is $h=\frac{1}{30}$).
\begin{center}
\begin{table}
\begin{tabular}{ c| c| c|c }
k & CI for \emph{caplet} price  & Parameters in $g(\cdot)$ & Relative error\\
\hline
0 & (0.0109399,{\bf{0.0112332}},0.0115265) & $\{0.14,0.01,0.05,0.2\}$ & - \\
6 & (0.0110543,{\bf{0.0113342}},0.0116141) & $\{2.08184,0.878775,3.89368,0.262653\}$ & 0.00899268
\end{tabular}
\caption{\label{tab:Iterations} Approximated caplet prices and estimated parameters.}
\end{table}
\end{center}
The effect of the variance dependent term can nicely be seen in Figures \ref{fig:Variances} and \ref{fig:Tails}. In Figure \ref{fig:Variances} the red line plots $\var_{Q_i}[L_t^{i,1}]$ resulting from the $M$ paths of the classical LMM as a function of $t\in [0,t_i-h)$. The blue line presents $\var_{Q_i}[L_t^{i,6}]$ computed from the paths of the 6th iteration step for $t\in [0,t_i-h)$.
\begin{figure}
\centering
\includegraphics[width=7cm]{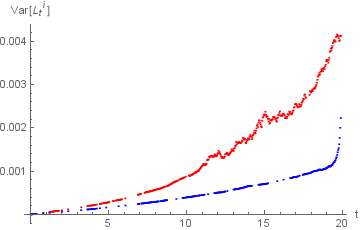}
\caption{Resulting variances of classical and mean-field LMM.}
\label{fig:Variances}
\end{figure}
In Figure \ref{fig:Tails} we depict the tail of the empirical distributions of $L_t^{i}$ and $L_t^{i,6}$ for $t=15$. Again the red line corresponds to the case of a deterministic volatility component and the blue one to the mean-field situation. One can nicely see that the un-damped variant features more probability mass for rates above 40\% of interest.
\begin{figure}
\centering
\includegraphics[width=7cm]{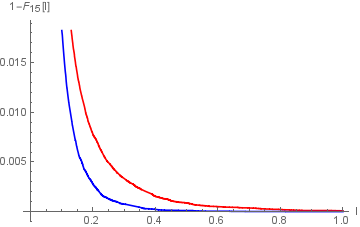}
\caption{Tail of the distribution of $L_{15}^i$ classical and mean-field LMM.}
\label{fig:Tails}
\end{figure}

\subsection{Conversion to the spot measure}\label{sec:spot}
Since existence and uniqueness of the mean-field system \eqref{e:LMM}, \eqref{eq:diffusion} are proved,
all the usual transformation rules of LIBOR market models apply. If a measure change, as below to the spot measure, is carried out it only remains to calculate the variances in \eqref{e:vol_gen} with respect to the new measure. 

Let $\mathcal{Q}^*$ be the spot measure and $W^*$ the corresponding $d$-dimensional Brownian motion as in, e.g., Section~11.4 of \cite{Fil}. 
The Girsanov transformation, keeping in mind the relation $ \sigma_k(t,\mu_t^{k}) = \lambda^k(t,\var_{\mathcal{Q}^k}(L_t^k))$, yields
\begin{equation}
\label{e:spot1}
    \di L_t^m
    = 
    L_t^m\Big(
        \sum_{k=\eta(t)}^m\frac{\delta_k L_t^k}{\delta_k L_t^k + 1}
        \lambda^k(t,\var_{\mathcal{Q}^k}(L_t^k))^{\top}
        \lambda^m(t,\var_{\mathcal{Q}^m}(L_t^m)) \di t
        +
        \lambda^m(t,\var_{\mathcal{Q}^m}(L_t^m))^{\top} \di W_t^*
    \Big)
\end{equation}
where the right-continuous function $\eta: [0,t_{M-1}]\to\{1,\dots,M\}$ is such that
\begin{equation}
   \label{e:eta}
    t_{\eta(t)-1}\le t < t_{\eta(t)}.
\end{equation}

However, the variances continue to depend on the forward measures. A calculation analogous to \eqref{e:V-trnsf} implies
\begin{equation}
\label{e:Phi_t}
    \var_{\mathcal{Q}^m}[L_t^m]
    = 
   \mathbb  E_{\mathcal{Q}^*}\Big[\Big(
    L_t^m - \mathbb E_{\mathcal{Q}^*}[L_t^m Y_t^m]
    \Big)^2 Y_t^m \Big]
    =: \Psi_t^{m,*} 
\end{equation}
where 
\begin{equation*}
    Y_t^m
    :=
    \frac{\mathrm{d}\mathcal{Q}^m}{\mathrm{d}\mathcal{Q}^*}\Big|_{\mathcal{F}_t}
    =
   \mathbb  E_{\mathcal{Q}^*}\Big[\frac{\mathrm{d}\mathcal{Q}^m}{\mathrm{d}\mathcal{Q}^*}\Big|\mathcal{F}_t\Big].
\end{equation*}


The expressions $\lambda^k(t,\Psi_t^{k,*})$ are the volatility coefficients in the spot formulation \eqref{e:spot1} with respect to the joint law, $\mu_t^{k,*}$, of $(L_{t}^k, Y_t^k)$ under $\mathcal{Q}^*$. We can also write $\lambda^k(t,\Psi_t^{k,*}) = \sigma_k^*(t,\mu_t^{k,*})$ to emphasize the dependence on $\mu_t^{k,*}$.

\subsubsection{Continuous-time formulation}
The process $Y^m$ can be expressed as a stochastic exponential (\cite[Equ.~(7.1)]{Fil})
\begin{equation}
\label{e:sto_exp}
    Y_t^m 
    =
    \mathcal{E}_t\left(
    - \int_0^\cdot \sum_{k=\eta(s)}^m\frac{\delta_k L_s^k}{\delta_k L_s^k + 1}
        \lambda^k(s,\Psi_s^{k,*})^{\top}
         \mathrm{d} W^*_s 
    \right).
\end{equation}
With \eqref{e:Phi_t} for the variance in terms of the joint law of $L_t$ and $Y_t$ under $\mathcal{Q}^*$ this means that \eqref{e:spot1} can be transformed into a mean field system along the lines of Theorem~\ref{TH:TH1}, where the $Y_t^m$ play now the roles of the $Z_t^m$:
\begin{align} 
\label{e:mfLY1}
    dL_t^m
    &=
    L_t^m\left(
        \sum_{k=\eta(t)}^m\frac{\delta_k L_t^k}{\delta_k L_t^k + 1}
        \lambda^k(t,\Psi_t^{k,*})^{\top}
        \lambda^m(t,\Psi_t^{m,*}) d t
        +
        \lambda^m(t,\Psi_t^{m,*})^{\top} \mathrm{d} W^*_t
    \right)\,,
    \\
\label{e:mfLY2}
    d Y_t^m
    &= 
    - Y_t^m \sum_{k=\eta(t)}^m\frac{\delta_k L_t^k}{\delta_k L_t^k + 1}
        \lambda^k(t,\Psi_t^{k,*})^{\top}
        \mathrm{d} W_t^*\,,
\end{align}
with $Y_0^m = 1$. 
Equations~\eqref{e:mfLY1} and \eqref{e:mfLY2} are again a mean field system of SDEs since the $\Psi_t^{k,*}$ depend on the joint law of $L_t^k$ and $Y_t^k$ under $\mathcal{Q}^*$. 
 
\subsubsection{Projection along tenor dates}
If $t=t_j$ is a tenor date, then the conditional expectation can be expressed as
\begin{equation}
    Y_{t_j}^m
    =
    B^*(t_j)^{-1}\frac{P(j,m)}{P(0,m)}\,,
\end{equation}
(see \cite[Sec.~7.1]{Fil}) where 
\begin{equation}
    B^*(t_j)
    = (1+\delta_{j-1} L_{t_{j-1}}^{j-1})B^*(t_{j-1}),
    \quad
    B^*(t_0) = 1\,,
\end{equation}
is the implied money market account (i.e., the numeraire) and 
\begin{equation}\label{eq:bond}
    P(j,m)
    =
    \Pi_{l=j}^{m-1}(1+\delta_l L_{t_j}^l)^{-1}\,,
\end{equation}
is the time $t_j$-value of one unit of currency paid at $t_m$. 

With
\begin{equation}
    \label{e:phi1}
    \Psi_j^{m,*}
    := 
    E_{\mathcal{Q}^*}\left[\left(
    L_{t_j}^m - E_{\mathcal{Q}^*}\left[L_{t_j}^m  B^*(t_j)^{-1}\frac{P(j,m)}{P(0,m)}\right]
    \right)^2  B^*(t_j)^{-1}\frac{P(j,m)}{P(0,m)} \right]\,
\end{equation}    
it follows that the evolution along the tenor dates of the mean field system \eqref{e:mfLY1}-\eqref{e:mfLY2} is given  by
\begin{equation}
\label{e:spot_phi}
    d L_{t_j}^m
    = 
    L_{t_j}^m\left(
        \sum_{k=j+1}^m\frac{\delta_k L_{t_j}^k}{\delta_k L_{t_j}^k + 1}
        \lambda^k(t_j,\Psi_j^{k,*})^{\top}
        \lambda^m(t_j,\Psi_j^{m,*}) \di t
        +
        \lambda^m(t_j,\Psi_j^{m,*})^{\top} \mathrm{d} W^*_t\,
    \right),
\end{equation}
since $\eta(t_j)=j+1$.

\subsubsection{Exogenous mean-field dynamics}
The discrete spot measure formulation \eqref{e:spot_phi} is the basis for the numerical scheme in Section~\ref{sec:numerics}.
Therefore, the volatility structure $\lambda^m(t,\Psi_t^{m,*})$ in \eqref{e:mfLY1}-\eqref{e:mfLY2} has to be specified.
To this end, and following the ideas of Section~\ref{sec:vol_str}, we split the volatility structure as 
\begin{equation}
\label{e:split}
    \lambda^m(t,\Psi_t^{m,*})
    =
    \sigma^{(1)}_m(t) \lambda^{\textup{mf}}(\Psi_t^{m,*})\,,
\end{equation}
where $\sigma^{(1)}_m(t)$ is a deterministic volatility specification and $\lambda^{\textup{mf}}(\Psi_j^{m,*})$ depends  on the distribution of $(L_t^m,Y_t^m)$ under the spot measure. We remark that $\lambda^{\textup{mf}}$ is assumed to be time-homogeneous, i.e. there is no explicit dependence on time.  In principle, $\lambda^{\textup{mf}}$ could also depend on the maturity $m$ but we will not need this extra degree of freedom.
Examples of possible choices for $\sigma^{(1)}_m(t)$ can e.g. be found in \cite{Brigo}.
The choices for the numerical study are presented in Section~\ref{sec:numerics} below. 

\begin{remark}
This approach seems to be promising when evaluating long-term guarantees as a part of life insurance or pension contracts. Here, one obtains the $\sigma_m^{(1)}$ by calibrating a classical LMM to market data and uses \eqref{e:split} with a variance dependent dampening factor in the internal Monte Carlo procedure. At first sight, this has the consequence that the valuation principle is not market-consistent, but the involved routine is more stable. Moreover, the numerical results from Section~\ref{sec:numerics} demonstrate in particular that for long maturities the difference is negligible.
\end{remark}

\section{Monte Carlo simulation of MF-LMM}
\label{sec:numerics}

The numerical implementation of the MF-LMM is based on an Euler-Maruyama scheme for \eqref{e:spot_phi} together with a specification of the splitting assumption \eqref{e:split}.
Making use of \eqref{e:spot_phi} implies that time steps equal tenor dates. This has the practical advantage that the
empirical variance \eqref{e:Phi_t} can be calculated without the necessity of simulating the process $Y^m$, defined in \eqref{e:mfLY2}. Moreover, this choice is compatible with industry practice for valuation of long term guarantees where the simulation time may be of the order of $100$ years and yearly time steps are generally used.  

For our numerical simulations, we consider the risk free term structure that has been provided by EIOPA at year-end 2020 (\cite{EIOPA_curve}). 
This interest rate curve is used by European insurance companies for the calculation of technical provisions.

In order to deal with negative interest rates, we apply a displacement factor $\alpha\in\mathbb{R}_{\ge0}$
(see \cite{diffusion} and \cite[p.~471]{Brigo}). 

\subsection{Euler-Maruyama discretization scheme}
\label{sec:euler}
Let $M$ be a positive integer and
consider a discrete time grid $0, 1, 2, \dots, M$ consisting of yearly time steps. Elements in the time grid will be denoted by $t_n$.

For numerical purposes the mean field equation \eqref{e:spot_phi} is approximated by an interacting particle system (IPS), compare equation \eqref{eq:euler1}. Each `particle' in this IPS corresponds to an interest rate curve that interacts with all other `particles' via a Monte Carlo approximation of \eqref{e:phi1}. If the number of particles, $P$, is sufficiently large we obtain a numerical approximation of the mean-field model. 

Within the IPS, the simulated stochastic processes evolve according to $P$ independent copies of a $d$-dimensional Brownian motion 
$(W^{p})_{1 \leq p \leq P}$ with increments 
$\Delta W^{p}_{t_n} := W^{p}_{t_{n+1}} - W^{p}_{t_n}$. 
For a given maturity $m$, all `particles' $L^{m,p}$ start from the same initial interest rate curve $L_0^m$.
In order to improve the numerical stability of the model, we will work on a logarithmic scale within the simulation. 
The numerical scheme is therefore as follows: at $t_n$ and for $1 \leq p \leq P$, the subsequent LIBOR rate is updated using the discretization
\begin{align}
\log(L_{t_{n+1}}^{m,p}+\alpha) 
    &= \log(L_{t_n}^{m,p}+\alpha) 
    +  \lambda^m(t_n,\Psi_{t_n}^{m,*})^{\top} 
        \sum_{k=\eta(t_n)}^{m} \frac{ L_{t_n}^{k,p}+\alpha}{1 + L_{t_n}^{k,p} + \alpha} \lambda^k(t_n,\Psi_{t_n}^{k,*})    \nonumber\\
&\phantom{==} 
    - \frac{1}{2} | \lambda^m(t_n,\Psi_{t_n}^{m,*}) |^2 
    + \lambda^m(t_n,\Psi_{t_n}^{m,*})^{\top} \Delta W^{p}_{t_n} , 
\label{eq:euler1}
\end{align}
where the right continuous function $\eta(t_n) = n+1 = t_{n+1}$ is defined in \eqref{e:eta}. 
The dependence on the joint law of the forward rates in \eqref{e:phi1} at time $t_n$ and maturity $m$ is thus realized as 
\begin{equation}
    \label{e:Psi_sim}
 \Psi_{t_n}^{m,*}  
 =  
 \frac{1}{P}\sum_{p=1}^P 
 \left[\left(
    L_{t_j}^{m,p} - 
    \frac{1}{P}\sum_{q=1}^P
    \left[L_{t_j}^{m,q}  B_q^*(t_j)^{-1}\frac{P_q(j,m)}{P_q(0,m)}\right]
    \right)^2  
    B_p^*(t_j)^{-1}\frac{P_p(j,m)}{P_p(0,m)} \right]\,,
\end{equation}
where $P_p(n,i)$ refers to the time $t_n$-value of one unit of currency paid at $t_i$ in the $p$-th particle, $1 \leq p \leq P$. 

By convention the dimension, $d$, of the Brownian increments, $\Delta W_{t_n}^p$, shall be equal to the dimension of the vectors $\lambda^m$.

\subsection{Volatility structure}\label{sec:VS2}
The classical LIBOR market model (without any mean-field interaction) is immediately realized as a special case of our approach \eqref{eq:euler1} by setting $\lambda^{\textup{mf}}=1$ in \eqref{e:split}. 
For $\sigma_m^{(1)}(t)$ 
we consider the parametric volatility structure  given by
\begin{equation}\label{eq:vola1}
    \lambda^m(t)
    =
    \sigma_m^{(1)}(t) 
    := 
    \left(\Big(a(t_{m-1} - t) + d\Big) e^{-b(t_{m-1} -t)} + c\right)
    \left(
    \begin{matrix}
        \cos\theta_m\\
        \sin\theta_m
    \end{matrix}
    \right), 
    \quad t \le t_{m-1} \,,
\end{equation}
where the $\theta_m$ are angles which depend on the maturity but not on time.
Thus, in this case, the dimension of the Brownian increment in \eqref{eq:euler1} is $d=2$. 
This choice provides a hump-shaped structure for instantaneous volatility of the LIBOR rate $L^m$ as a function of the time to maturity. See \cite{Brigo,Rebonato} for further background and an economic interpretation.

The subsequently presented results are obtained with respect to the year-end 2020 (without the so-called volatility adjustment) risk-free EIOPA interest rate curve (\cite{EIOPA_curve}) with a projection horizon of $M=50$.
 
The displacement factor is fixed as 
\begin{equation}
    \label{e:dispF}
    \alpha = 1\,\%.
\end{equation}
We consider two sets of parameters for the hump shaped volatility curve: 
\begin{align}
    \textup{RMW parameters: }\quad
    &
    a = 0.07,\quad
    b = 0.2, \quad
    c = 0.6, \quad
    d = 0.075  \label{e:paraRMW} \\
    \textup{Excited parameters: }\quad
    &a = 0.01,\quad
    b = 0.05, \quad
    c = 0.2, \quad
    d = 0.14 
    \label{e:para}
\end{align}
The values \eqref{e:paraRMW} are taken from the textbook \cite[page~13]{RMW} where these are interpreted as representing a `normal' state of the volatility structure. The parameters \eqref{e:para} are chosen specifically to represent an excited state of the market with increased volatility, see Figure~\ref{fig:vol_cur}. This is where the blow-up problem is most pronounced whence the mean-field interaction has the strongest (and graphically most visible) effect.

\begin{figure}
    \centering
    \includegraphics[width=7cm]{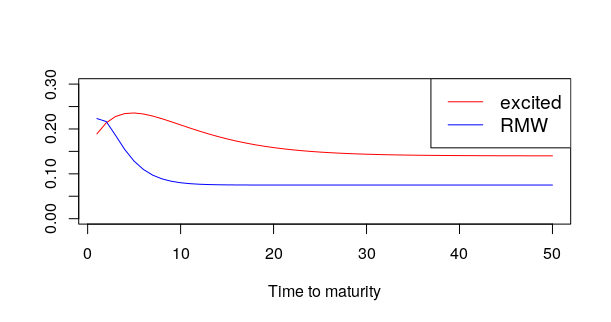}
    \caption{Volatility curves representing the scalar part of \eqref{eq:vola1} with respect to time to maturity $\tau_m=M-t_m$ and parameters \eqref{e:paraRMW}-\eqref{e:para}.}
    \label{fig:vol_cur}
\end{figure}
 
The angles, $\theta_m$ are chosen as in Figure~\ref{fig:theta} to represent a generic and economically plausible correlation structure~\eqref{e:corrM}.  
\begin{figure}
    \centering
    \includegraphics[width=7cm]{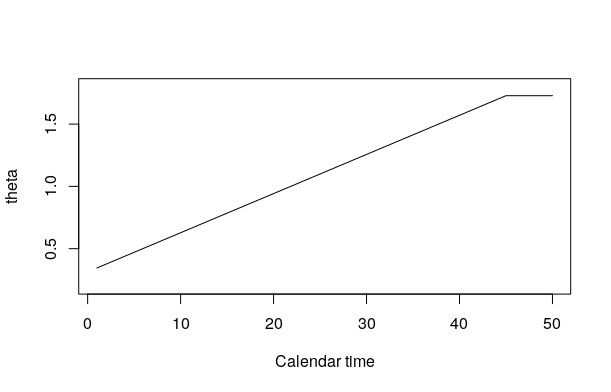}
    \caption{Choice of angles, $\theta_m$, as a function of time indexed by $m$.}
    \label{fig:theta}
\end{figure}
 
The choice depicted in Figure~\ref{fig:theta} yields the following correlation structure $\cos(\theta_m-\theta_n)$,  where indices $m,n$ are in $\{1,6,11,16,\dots,46\}$:
\begin{equation}
\label{e:corrM}
\begin{bmatrix}
  1.00 & 0.99 & 0.95 & 0.89 & 0.81 & 0.71 & 0.59 & 0.45 & 0.31 & 0.19 \\ 
  0.99 & 1.00 & 0.99 & 0.95 & 0.89 & 0.81 & 0.71 & 0.59 & 0.45 & 0.34 \\ 
  0.95 & 0.99 & 1.00 & 0.99 & 0.95 & 0.89 & 0.81 & 0.71 & 0.59 & 0.48 \\ 
  0.89 & 0.95 & 0.99 & 1.00 & 0.99 & 0.95 & 0.89 & 0.81 & 0.71 & 0.61 \\ 
  0.81 & 0.89 & 0.95 & 0.99 & 1.00 & 0.99 & 0.95 & 0.89 & 0.81 & 0.73 \\ 
  0.71 & 0.81 & 0.89 & 0.95 & 0.99 & 1.00 & 0.99 & 0.95 & 0.89 & 0.83 \\ 
  0.59 & 0.71 & 0.81 & 0.89 & 0.95 & 0.99 & 1.00 & 0.99 & 0.95 & 0.90 \\ 
  0.45 & 0.59 & 0.71 & 0.81 & 0.89 & 0.95 & 0.99 & 1.00 & 0.99 & 0.96 \\ 
  0.31 & 0.45 & 0.59 & 0.71 & 0.81 & 0.89 & 0.95 & 0.99 & 1.00 & 0.99 \\ 
  0.19 & 0.34 & 0.48 & 0.61 & 0.73 & 0.83 & 0.90 & 0.96 & 0.99 & 1.00 \\ 
   \end{bmatrix}
\end{equation}
In the following, we compare four simulation methods differing in their volatility structure. We refer to the classical model without mean-field dependence as \emph{VolSwi2} and the method with mean-field taming as \emph{VolSwi25} (Section~\ref{sec:VS25}). We furthermore present simulation methods with correlation assumptions based on economic considerations including anti-correlation (\emph{VolSwi4} in Section~\ref{sec:VS4}) and decorrelation of interest rates (\emph{VolSwi6} in Section~\ref{sec:VS6}) in order to deal with "exploding" rates". In this context we define blow-up, or explosion, as the occurrence of a significant number (i.e., more than $1\%$) of scenarios beyond a certain threshold (i.e., $50\%$ interest) at a given time. This can be tested graphically by looking at the histograms in Figures~\ref{fig:hist_RMW} and \ref{fig:hist} of $L^1$ at times $10$, $20$, $30$ and $40$, or at the excess plots in Figure~\ref{fig:exc}. 

\subsection{Mean-field taming beyond threshold}\label{sec:VS25}
One possibility of mitigating explosion is to include a taming factor which depends on the observed scenario variance, $\Psi_t^{m,*}$, at time $t$ under the spot measure. Thus we choose a variance threshold $\tilde\sigma$ and define
\begin{equation}
    \label{e:taming}
    \lambda^m(t,\Psi_t^{m,*})
    = 
    \sigma_m^{(1)}(t)\exp\Big(-\max \{\Psi_t^{m,*}-\tilde\sigma,0\} / \tilde\sigma \Big)\,,
\end{equation}
where $\sigma_m^{(1)}$ is given by \eqref{eq:vola1}. 
In this case the dimension of the Brownian increment in \eqref{eq:euler1} is $d=2$. 

For the purposes of the simulation, $\Psi_t^{m,*}$ is given by \eqref{e:Psi_sim}, and the relevant parameters are \eqref{e:para} and the variance threshold is
\begin{equation}
    \label{e:V}
    \tilde \sigma = \Big( L^{10}_0 \Big)^2\,,
\end{equation}
which is the square of the initial $10$ year forward rate. 
This means that the threshold is assumed to correspond to a coefficient of variation (relative standard deviation) of the $10$ year yield of $100\%$.
It would also be possible to choose a different threshold corresponding to different maturities, however we find  that this only adds unnecessary complexity. 

As expected, this taming reduces the scenario variance, thereby making explosion very unlikely. This effect can be observed by looking at the histograms in Figure~\ref{fig:hist} of $L^1$ at times $10$, $20$, $30$ and $40$, or at the excess plots In Figures \ref{fig:excRMW} and \ref{fig:exc}.

The taming function is such that, once the threshold has been breached, the growth rate of the scenario variance approaches $0$.

Accordingly, there is a strong effect on cap prices. Indeed, once the scenario variance has increased beyond the threshold, the cap prices begin to decrease when compared to the mean-field independent case, compare Figures~\ref{fig:Caps_RMW} and \ref{fig:Caps}. Since we assume that the parameters, \eqref{e:paraRMW} or \eqref{e:para}, are obtained from a calibration routine based on the classical LMM, this poses restrictions on the applicability of the taming structure with respect to derivatives (such as caplets) whose values depend on  instantaneous volatilities. 

\begin{remark}
Due to Remark~\ref{rem:max}, the method of Section~\ref{sec:VS25} is covered by our existence and uniqueness Theorem~\ref{TH:TH1}. The variants in Sections~\ref{sec:VS6} and \ref{sec:VS4} are included because in the numerical study of their potential practical interest, but the corresponding existence and uniqueness problem is left for future research. 
\end{remark}

\subsection{Decorrelation beyond threshold}\label{sec:VS6}
In this section we assume a continuous decorrelation of rates as the observed variance $\Psi^{m,*}$ increases. 
Thus, the splitting \eqref{e:split} is realized as 
\begin{equation}
    \label{e:VS6}
    \lambda^m\Big(t,\Psi_t^{m,*}\Big)
    = 
    \left(
     \exp(-\Psi_t^{m,*}/ \tilde{\sigma})
      i_M  \sigma^{(1)}_m(t)
    +
    \Big(
    1 - \exp(-\Psi_t^{m,*}/\tilde{\sigma}) 
    \Big)e_m
    \right)/F
\end{equation}
where $e_m$ is the $m$-th standard vector in $\mathbb{R}^M$, $i_M: \mathbb{R}^2\to\mathbb{R}^M$ is the embedding along the first two factors, and $F = F(t,\Psi_t^{m,*})$ is normalization factor such that $|\lambda^m| = |\sigma^{(1)}_m|$. 
In this case the dimension of the Brownian increment in \eqref{eq:euler1} is $d=M$. 

If $\Psi_t^{m,*}$ is small compared to $\tilde{\sigma}$ then the system behaves approximately according to \eqref{eq:vola1}, and if $\Psi_t^{m,*}$ is large compared to $\tilde{\sigma}$ decorrelation sets in. 
The decorrelation approach is not quite as reliable as the taming function in reducing the probability of blow-up. However, compared to the standard LMM, the likeliness of explosion is still reduced significantly. See Figures~\ref{fig:hist_RMW} and \ref{fig:hist}  or the excess plots in Figures \ref{fig:excRMW} and \ref{fig:exc}.

Note that the instantaneous volatility, $\sigma_m^{(1)}$, remains unchanged in this setting. Therefore, caplet prices are unaffected.
Thus it can be expected, and is numerically verified in Figures~\ref{fig:Caps_RMW} and \ref{fig:Caps} that cap prices are preserved.
Concerning swaption prices, we observe a good level of replication under normal initial market conditions, represented by parameters~\eqref{e:paraRMW}, but significant deviations under excited conditions, represented by parameters~\eqref{e:para}.

\subsection{Anti-correlation beyond threshold}\label{sec:VS4}
The approach of Section~\ref{sec:VS6} is successful at mitigating explosion and preserving cap prices. However, as noted, there may be undesired effects on swaption prices.

Thus we consider the following anti-correlation prescription: 
\begin{equation} 
   \label{e:anti-cor}
    \lambda^m\Big(t,\Psi_t^{m,*}\Big)
    =
    \left\{
    \begin{matrix}
        \eqref{eq:vola1} && \quad\textup{if}\quad \Psi_t^{m,*} \le \tilde\sigma \\
        |\sigma_m^{(1)}(t)| e_n 
        &&\quad\textup{if}\quad \Psi_t^{m,*} > \tilde\sigma \textup{ and } m = 2n-1\\
        -|\sigma_m^{(1)}(t)| e_n 
        &&\quad\textup{if}\quad \Psi_t^{m,*} > \tilde\sigma \textup{ and } m = 2n
    \end{matrix}
    \right\}
\end{equation}
where $n=1,\dots,M/2$ (we assume that $M$ is even) and $e_n$ is the $n$-th standard basis vector.
In this case the dimension of the Brownian increment in \eqref{eq:euler1} is $d=M$.

\begin{figure}
    \centering
    \includegraphics[width=0.49\textwidth]{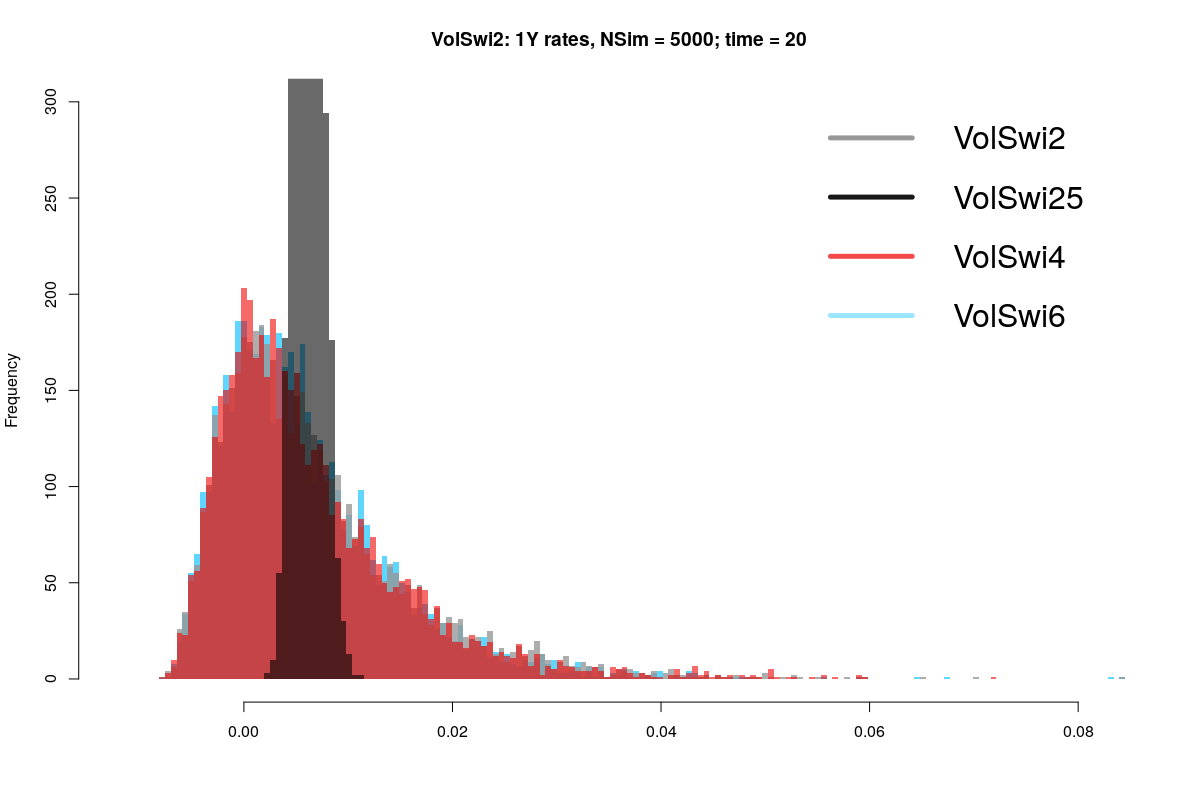}
    \includegraphics[width=0.49\textwidth]{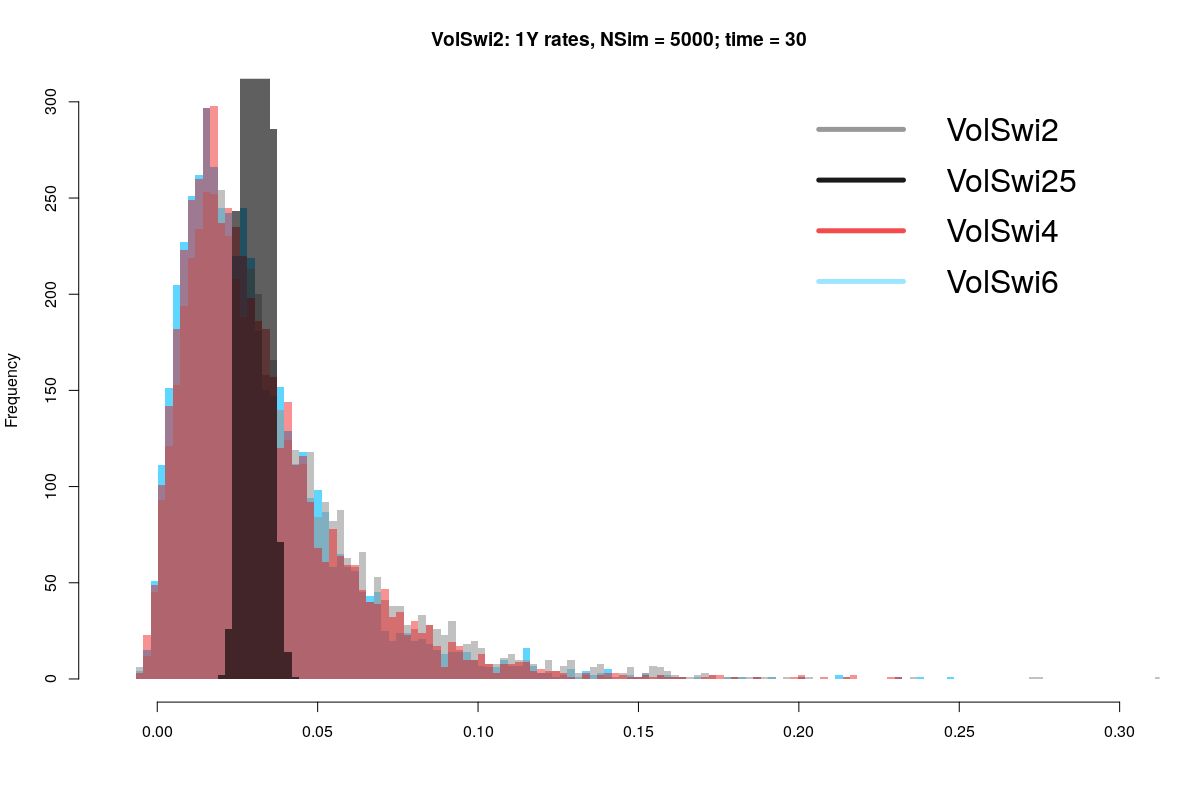}
    \includegraphics[width=0.49\textwidth]{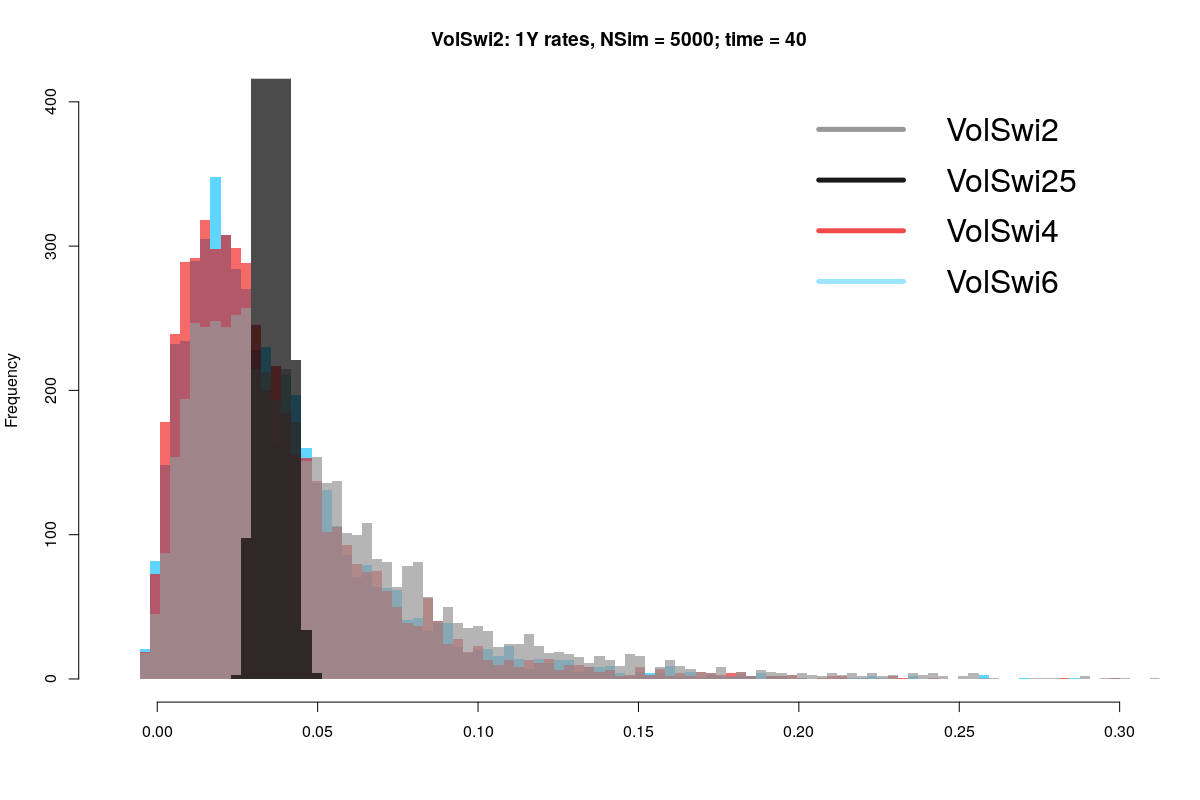}
    \includegraphics[width=0.49\textwidth]{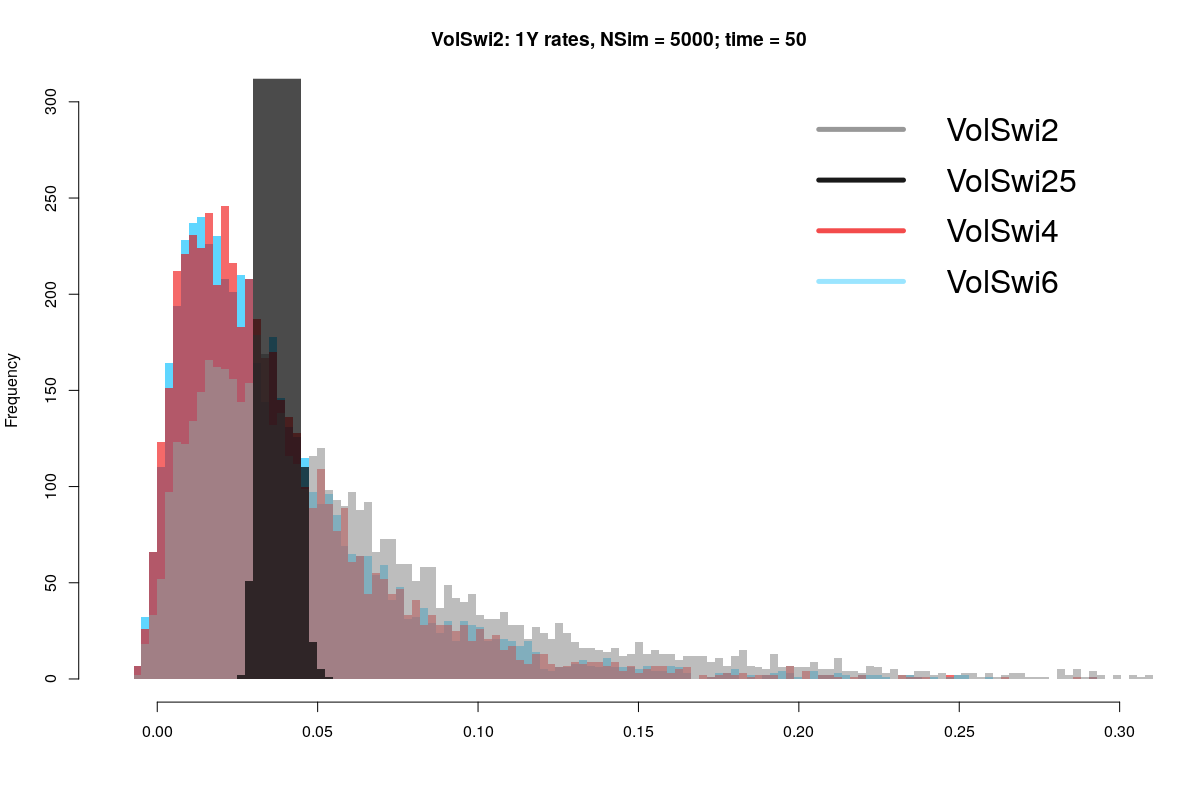}
    \caption{Histograms of $L^1$  generated according to \eqref{eq:vola1} at times $10$, $20$, $30$ and $40$. Parameters: \eqref{e:paraRMW}.}
    \label{fig:hist_RMW}
\end{figure}

\begin{figure}
    \centering
    \includegraphics[width=0.49\textwidth]{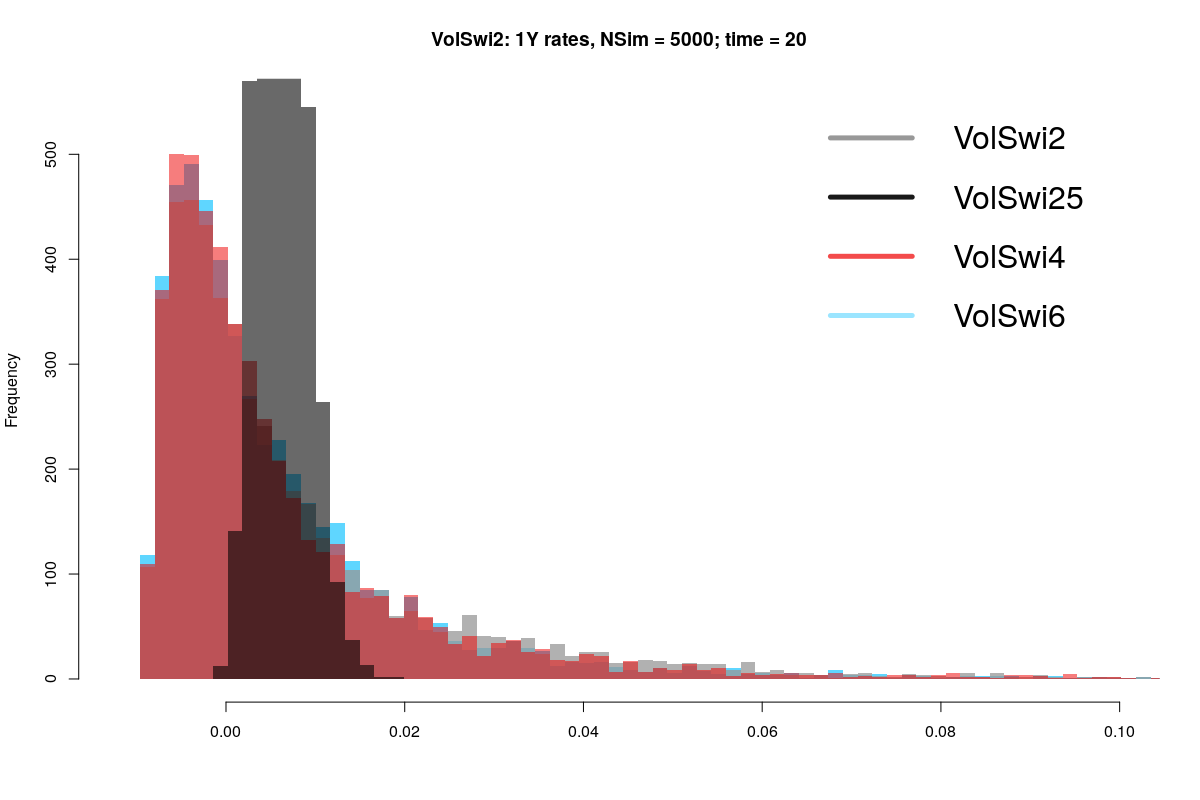}
    \includegraphics[width=0.49\textwidth]{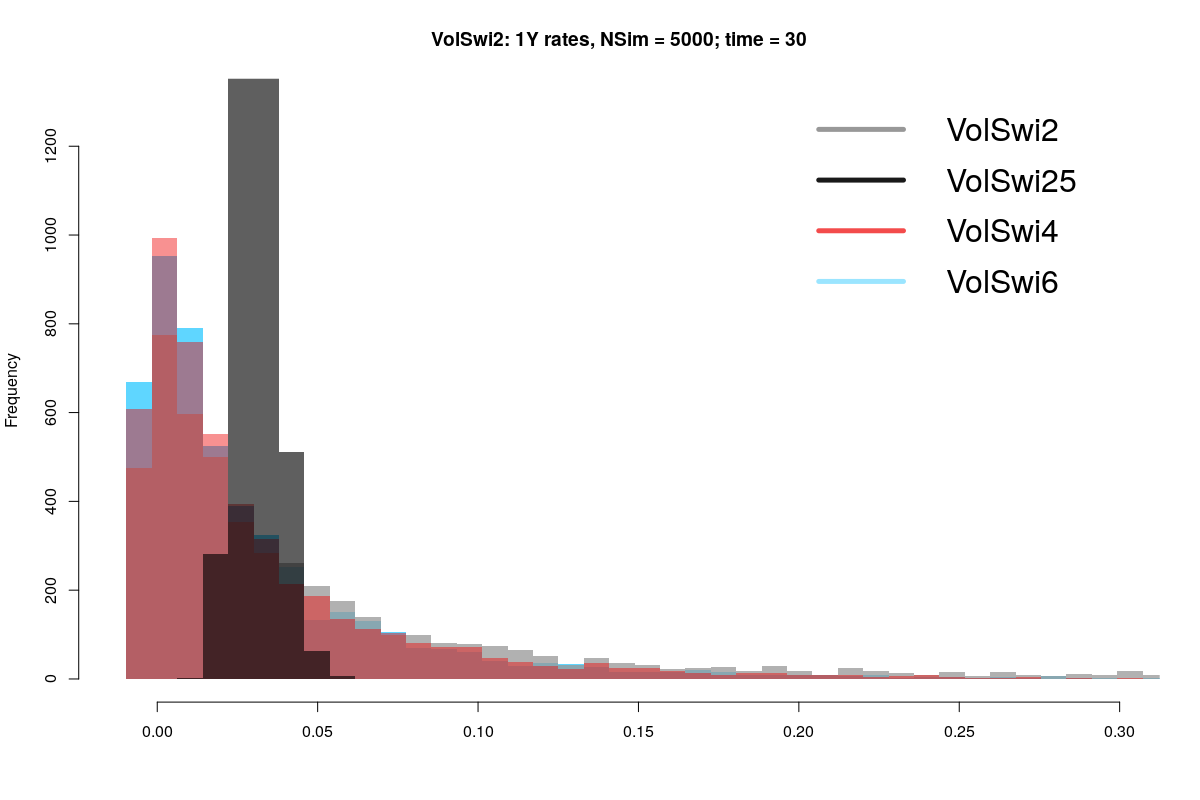}
    \includegraphics[width=0.49\textwidth]{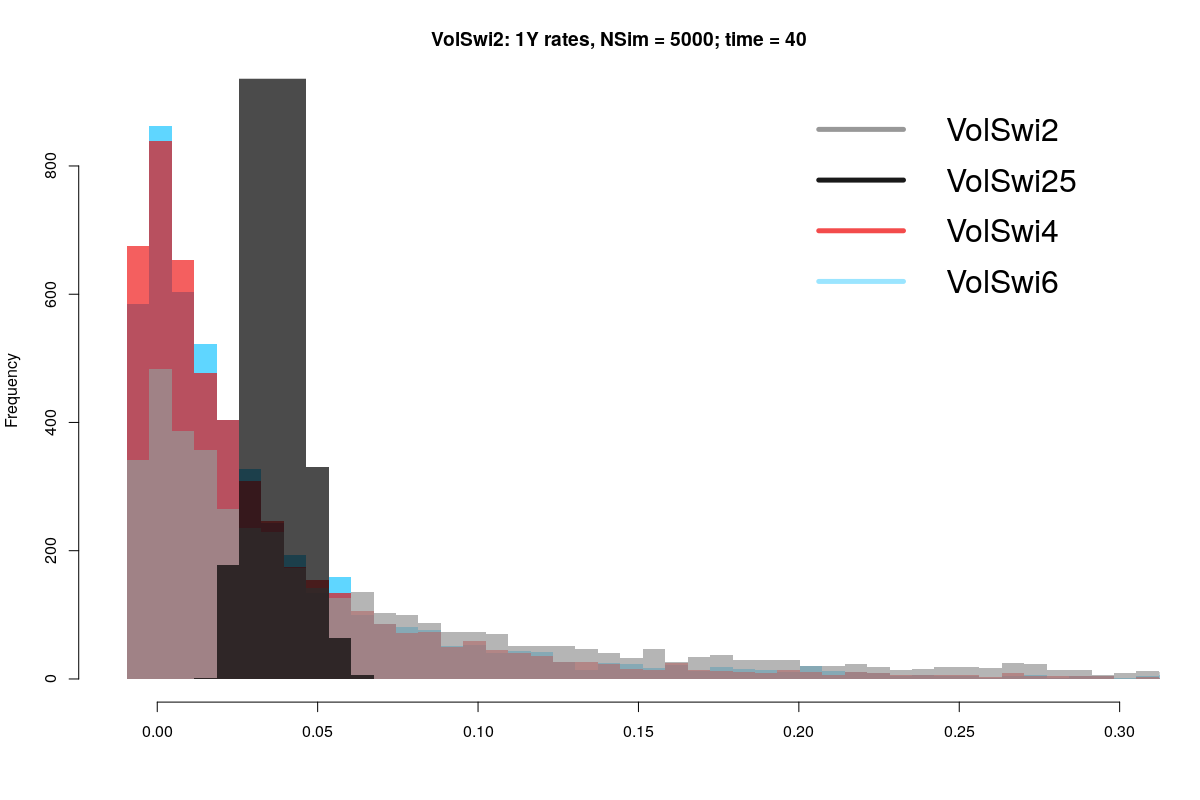}
    \includegraphics[width=0.49\textwidth]{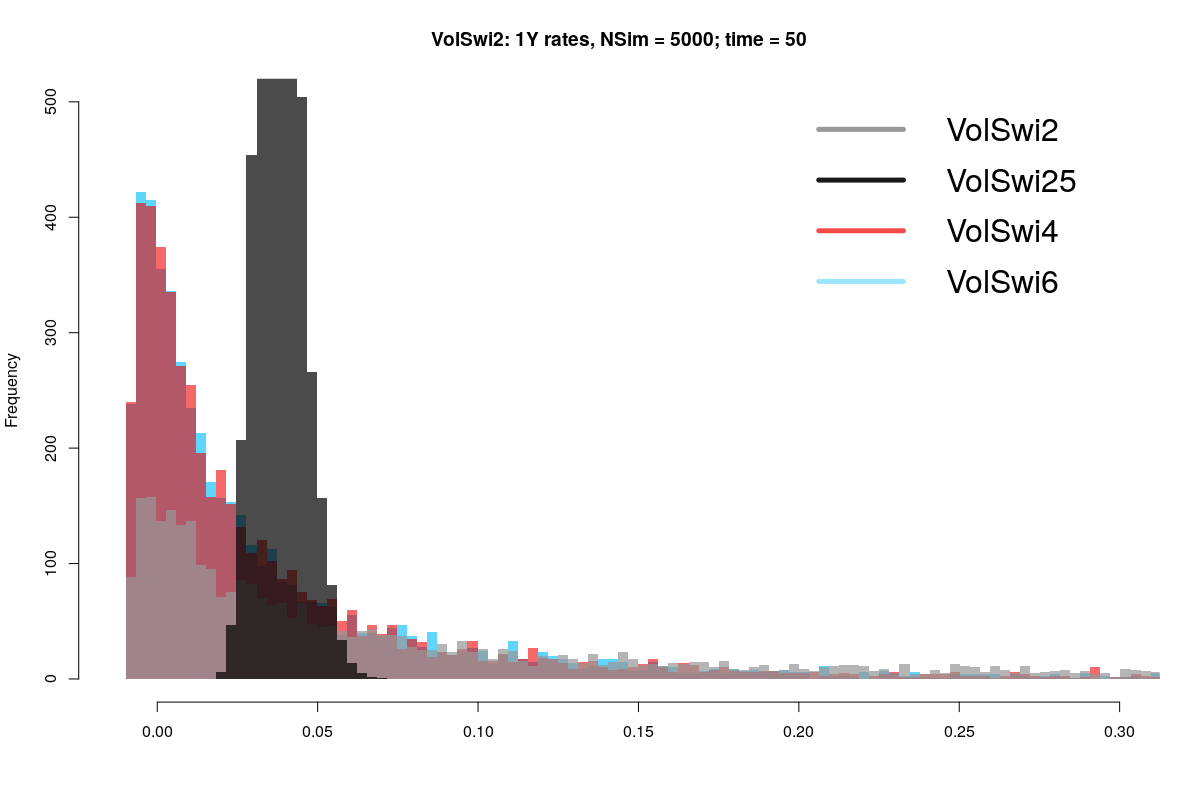}
    \caption{Histograms of $L^1$  generated according to \eqref{eq:vola1} at times $10$, $20$, $30$ and $40$. Parameters: \eqref{e:para}.}
    \label{fig:hist}
\end{figure}

Moreover, it is numerically verified that this choice (approximately) preserves caplet prices (Figure~\ref{fig:Caps_RMW} and \ref{fig:Caps}) and swaption prices (Figures~\ref{fig:SwaptionsRMW} and \ref{fig:Swaptions}), and significantly reduces blow-up (Figure~\ref{fig:hist}).
Finally, the time evolution of the percentage of scenarios exceeding $50\%$, resp.\ $100\%$, is shown in Figures~\ref{fig:excRMW} and \ref{fig:exc}.

\begin{figure}
    \centering
    \includegraphics[width=7cm]{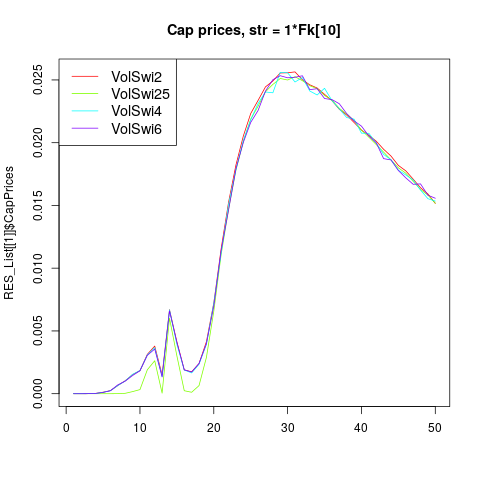}
    \includegraphics[width=7cm]{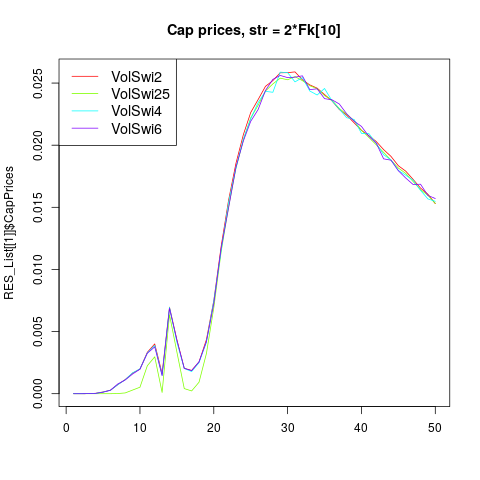}
    \includegraphics[width=7cm]{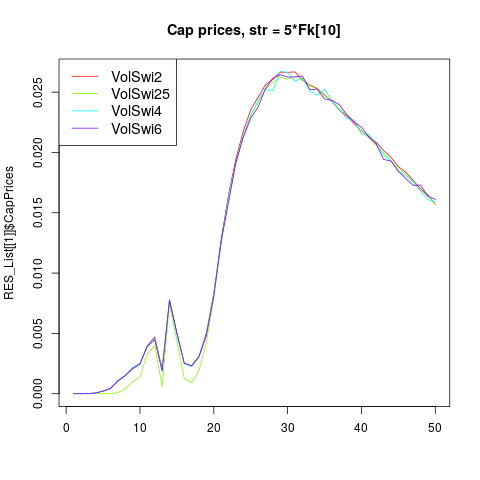}
    \includegraphics[width=7cm]{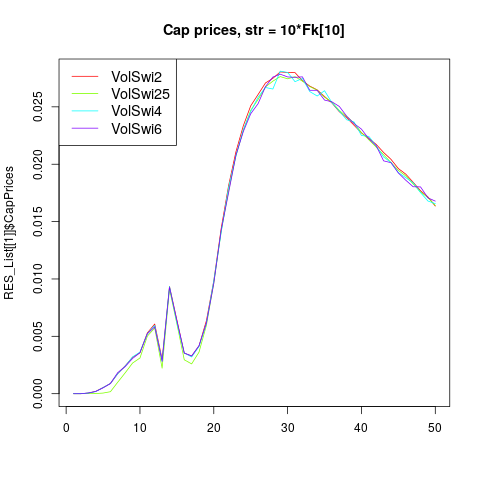}
    \caption{$1$-year caplet prices: VolSwi2 corresponds to Section~\ref{sec:VS2}, VolSwi25 to Section~\ref{sec:VS25}, VolSwi6 to Section~\ref{sec:VS6} and VolSwi4 to Section~\ref{sec:VS4}. The red line (VolSwi2) corresponds to the model calibration and is viewed as the `truth'. Note the deviation of the VolSwi25 line corresponding to the dampening assumption. Parameters: \eqref{e:paraRMW}}
    \label{fig:Caps_RMW}
\end{figure}

\begin{figure}
    \centering
    \includegraphics[width=7cm]{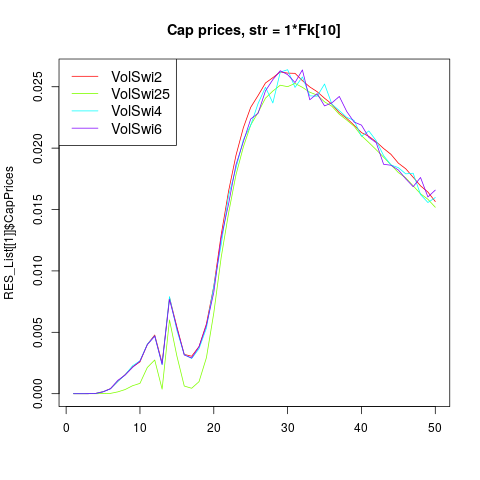}
    \includegraphics[width=7cm]{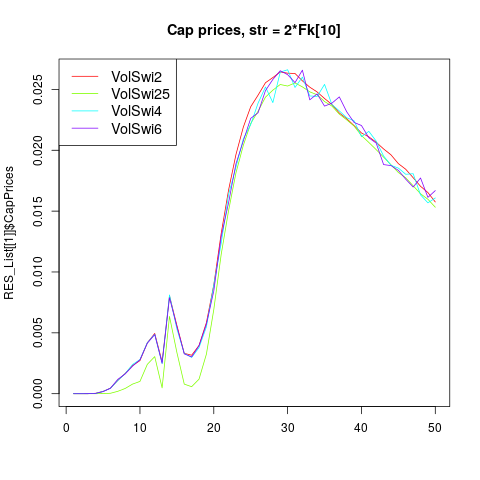}
    \includegraphics[width=7cm]{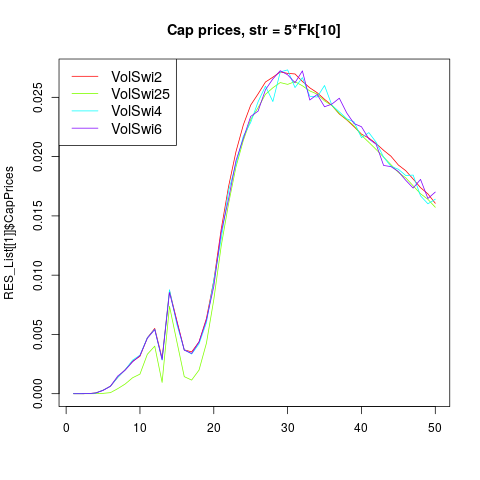}
    \includegraphics[width=7cm]{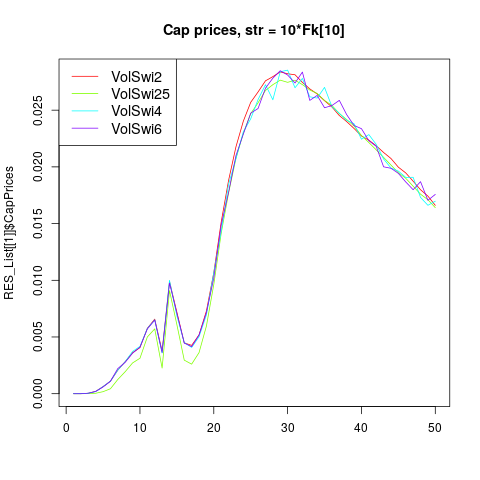}
    \caption{$1$-year caplet prices: VolSwi2 corresponds to Section~\ref{sec:VS2}, VolSwi25 to Section~\ref{sec:VS25}, VolSwi6 to Section~\ref{sec:VS6} and VolSwi4 to Section~\ref{sec:VS4}. The red line (VolSwi2) corresponds to the model calibration and is viewed as the `truth'. Note the deviation of the VolSwi25 line corresponding to the dampening assumption. Parameters: \eqref{e:para}}
    \label{fig:Caps}
\end{figure}

\begin{figure}
    \centering
    \includegraphics[width=7cm]{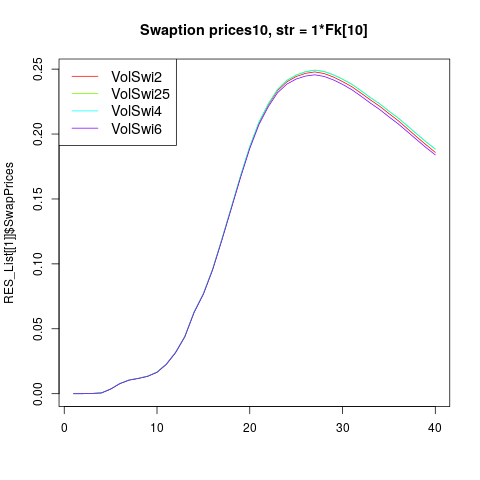}
    \includegraphics[width=7cm]{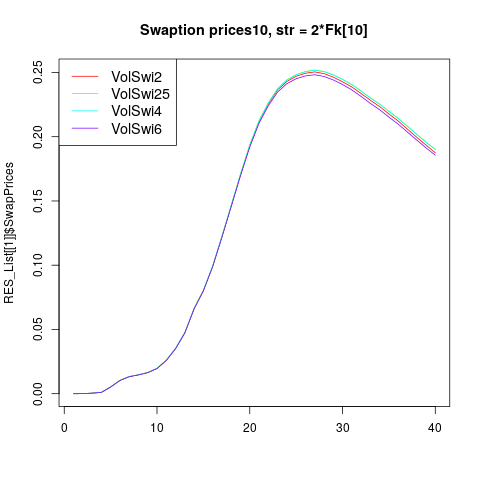}
    \includegraphics[width=7cm]{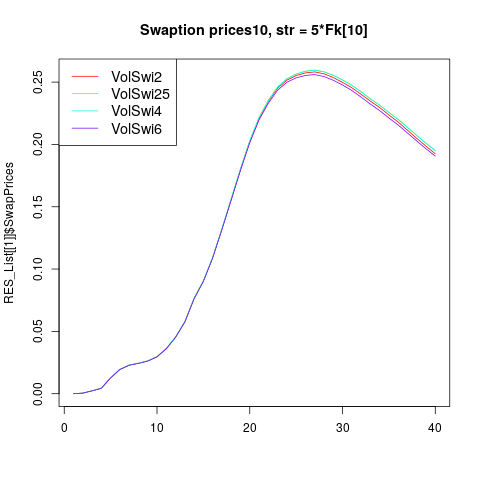}
    \includegraphics[width=7cm]{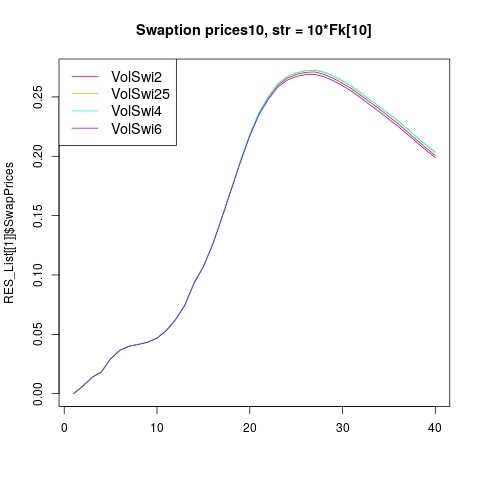}
    \caption{$10\times 10$ swaption prices: VolSwi2 corresponds to Section~\ref{sec:VS2}, VolSwi25 to Section~\ref{sec:VS25}, VolSwi6 to Section~\ref{sec:VS6} and VolSwi4 to Section~\ref{sec:VS4}. The red line (VolSwi2) corresponds to the model calibration and is viewed as the `truth'. Graphically, it is hardly distinguishable from VolSwi25 and VolSwi4. Parameters: \eqref{e:paraRMW}}
    \label{fig:SwaptionsRMW}
\end{figure}

\begin{figure}
    \centering
    \includegraphics[width=7cm]{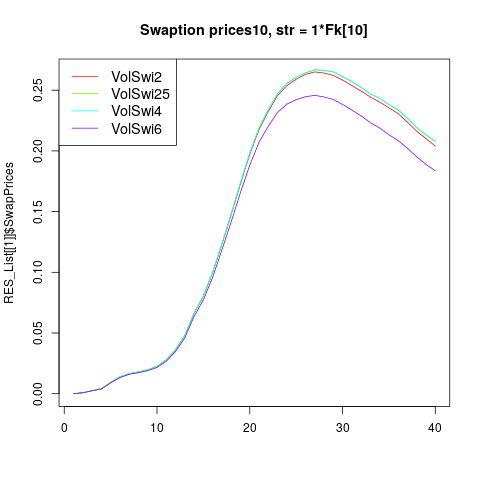}
    \includegraphics[width=7cm]{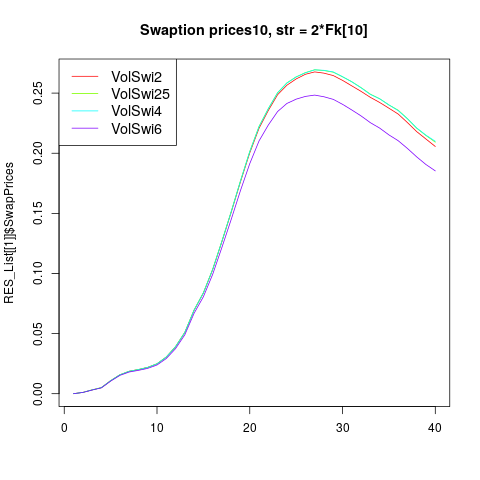}
    \includegraphics[width=7cm]{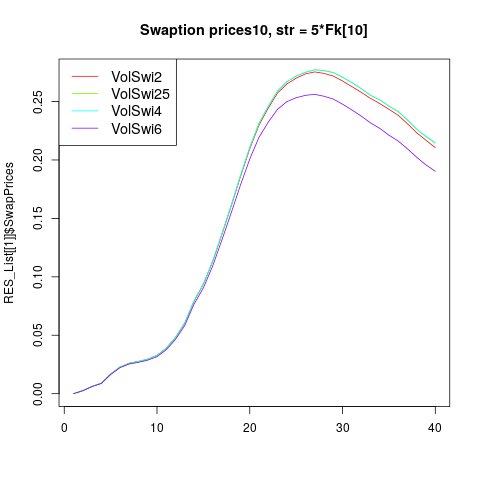}
    \includegraphics[width=7cm]{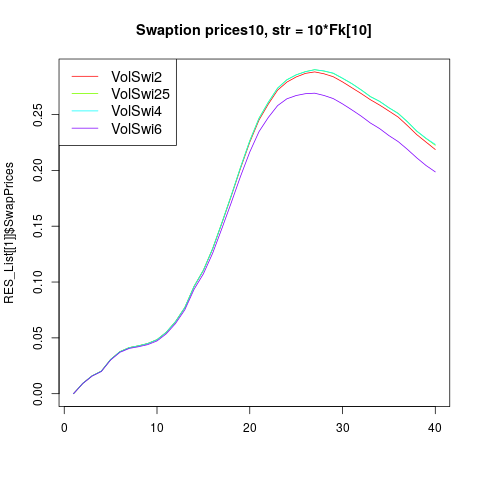}
    \caption{$10\times 10$ swaption prices: VolSwi2 corresponds to Section~\ref{sec:VS2}, VolSwi25 to Section~\ref{sec:VS25}, VolSwi6 to Section~\ref{sec:VS6} and VolSwi4 to Section~\ref{sec:VS4}. The red line (VolSwi2) corresponds to the model calibration and is viewed as the `truth'. Graphically, it is hardly distinguishable from VolSwi25 and VolSwi4. Parameters: \eqref{e:para}}
    \label{fig:Swaptions}
\end{figure}

\begin{figure}
    \centering
    \includegraphics[width=7cm]{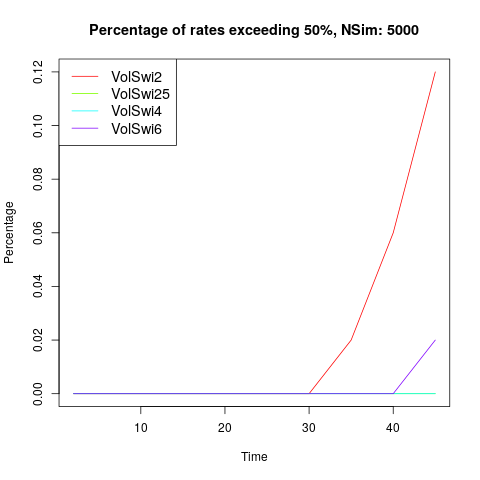}
    \includegraphics[width=7cm]{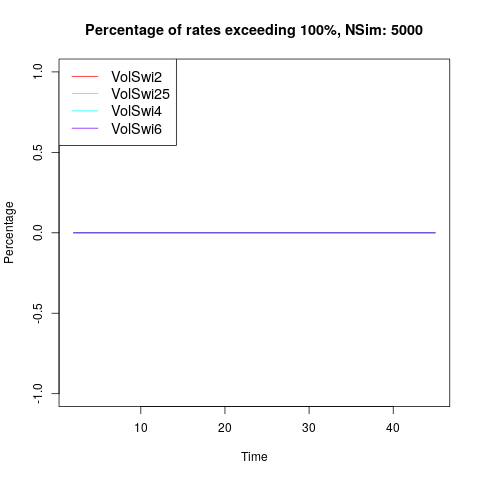}
    \caption{Excess plots: The time evolution of the percentage of scenarios exceeding $50\%$, resp.\ $100\%$, is shown. VolSwi2 corresponds to Section~\ref{sec:VS2}, VolSwi25 to Section~\ref{sec:VS25}, VolSwi6 to Section~\ref{sec:VS6} and VolSwi4 to Section~\ref{sec:VS4}.
    Parameters: \eqref{e:paraRMW}}
    \label{fig:excRMW}
\end{figure}

\begin{figure}
    \centering
    \includegraphics[width=7cm]{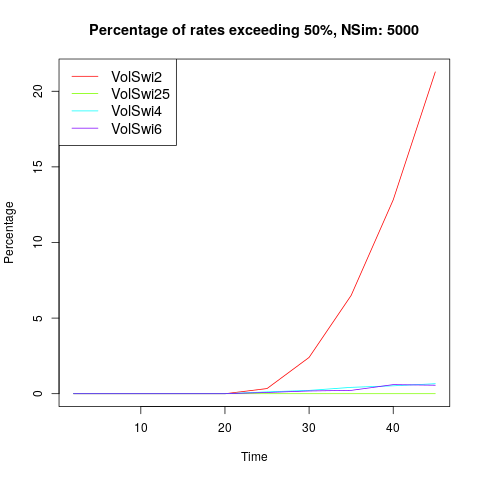}
    \includegraphics[width=7cm]{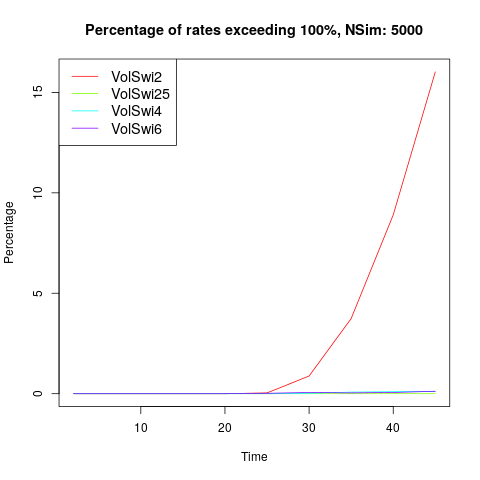}
    \caption{Excess plots: The time evolution of the percentage of scenarios exceeding $50\%$, resp.\ $100\%$, is shown. VolSwi2 corresponds to Section~\ref{sec:VS2}, VolSwi25 to Section~\ref{sec:VS25}, VolSwi6 to Section~\ref{sec:VS6} and VolSwi4 to Section~\ref{sec:VS4}.
    Parameters: \eqref{e:para}}
    \label{fig:exc}
\end{figure}

\newpage
\section{Existence and uniqueness of the solution to the underlying mean-field SDE}\label{Sec:WEll}


In what follows, we require the following notions and definitions:
\begin{itemize}
\item For a given $T >0$, we denote by $\mathscr{C} := C([0,T], \mathbb{R})$ the space of real-valued continuous functions endowed with the supremum norm,
\begin{equation*}
    \| f \|_t := \sup_{0 \leq s \leq t} |f_s|
\end{equation*} 
for $f \in \mathscr{C}$. The space $\mathscr{C}$ is also called path space. 
\item 
For $p \geq 2$, 
$\mathcal{S}^p([0,T])$ refers to the space of $\mathbb{R}^d$-valued progressively measurable, continuous processes, defined on the interval $[0,T]$, with bounded $p$-th moments, i.e., processes $(X_t)_{0 \leq t \leq T}$ satisfying $\mathbb{E} \left[ \|X \|_{T}^p \right] < \infty$. 
\item 
The set of probability measures on path-space $\mathscr{C}$ is denoted by
$\mathscr{P}(\mathscr{C})$ and the subset of 
square integrable
probability measures is
is denoted by
\begin{equation*}
 \mathscr{P}_2(\mathscr{C}) 
 =
    \Big\{
    \mu\in\mathscr{P}(\mathscr{C}):
    \mathbb{E}^{\mu}\left[ \| X \|^2_T \right] < \infty
    \Big\}.
\end{equation*}
\item 
As a metric on $\mathscr{P}_2(\mathscr{C})$, we use the following variant of the Wasserstein distance, see e.g., \cite{CD}: 
for $\mu, \nu \in \mathscr{P}_2(\mathscr{C})$ define
\begin{equation}
\label{e:W1}
    \mathbb{W}^{(2), \mathscr{C}}_T(\mu, \nu) 
    := 
    \left(\inf_{\pi \in \Pi(\mu,\nu)} 
    \int_{\mathscr{C} \times \mathscr{C}} \|x-y\|_T^2 \pi(\mathrm{d}x,\mathrm{d}y) \right)^{1/2},
\end{equation}
where $\Pi(\mu,\nu)$ denotes the set of couplings of $\mu$ and
$\nu$, i.e., $\pi \in \Pi(\mu,\nu)$ if and only if
$\Pi(\cdot,\mathscr{C})=\mu(\cdot)$ and $\Pi(\mathscr{C},\cdot)=\nu(\cdot)$.
We recall the definition of the standard $L_2$ Wasserstein distance: For any $\mu, \nu \in \mathscr{P}_2(\mathbb{R}^d)$, we define
\begin{equation}
\label{e:W2}
\mathbb{W}_{2}(\mu, \nu) := \left(\inf_{\pi \in \Pi(\mu,\nu)} \int_{\mathbb{R}^d \times \mathbb{R}^d} |x-y |^2 \pi(\mathrm{d}x,\mathrm{d}y) \right)^{1/2}.
\end{equation}
\end{itemize}

Since the coefficients of our MF-LMM are not Lipschitz continuous with respect to the $L_2$ Wasserstein distance as defined in \eqref{e:W1} and \eqref{e:W2}, common results on the existence and uniqueness of solutions to the corresponding mean-field SDEs do not apply. Hence, we need to show in full generality the existence and uniqueness of solutions for a class of mean-field SDEs involving such non-standard coefficients.  
Consider, for a given time horizon $[0,T]$, the following mean-field SDE on $\R^d$
\begin{equation}\label{EE1}
    \d X_t
    = b \left(t,X_t,\mu_t^{X} \right)\d t
       + \si(t,X_t,\mu_t^{X})\d W_t, 
    \quad X_0=\xi,
\end{equation}
where here $\mu_t^{X}$ is the marginal law of $X$ at the time $t \geq 0$, 
$b:[0,T] \times \Omega \times  \R^d \times \mathscr{P}_2(\R^d) \to \R^{d}$ and
$\si: [0,T] \times \Omega \times \R^d\times \mathscr
{P}_2(\R^d)\to \R^{d} \otimes \R^{m}$
are progressively measureable maps satisfying the assumptions stated below,
$\xi$ is an $\R^d$-valued random variable with bounded $p$-th moments (for a given $p \geq 2$), and $(W_t)_{t\ge0}$ is an
$m$-dimensional Brownian motion on the filtered probability space
$(\OO,\mathcal{F},(\mathcal{F}_t)_{t\ge0},\P)$. 

Here, we consider the case that $b$ and $\sigma$ are decomposable as
\begin{equation}
\label{e:dec}
 b(t,x,\mu) 
 = b_1(t,x,\mu) + xg(\mu)h_1(t,\omega), 
 \quad 
 \sigma(t,x,\mu) 
 = \sigma_1(t,x,\mu) + h_2(t,\omega)x^{T}g(\mu),
\end{equation}
where $g: \mathscr {P}_2(\R^d) \to \R$, $h_1:[0,T] \times \Omega \to \RR, h_2:[0,T] \times \Omega \to \RR^m$ and we will assume that, for any $x,y\in\R^d $, any $t \in [0,T]$, and $\mu,\nu\in\mathscr
{P}_2(\R^d)$:
\begin{enumerate}
\item[({\bf A}$_b^1$)] There exists a constant $L_b^1>0$ such that
\begin{align*}
|b_1(t,x,\mu)-b_1(t,y,\nu)| \le L_b^1(|x-y|+\mathbb{W}_2(\mu,\nu)).
\end{align*}
\item[({\bf A}$_\si^1$)] There exists a constant $L_\si^1>0$ such that
\begin{equation*}
\|\sigma_1(t,x,\mu)-\sigma_1(t,y,\nu)\| \le L_\si^1(|x-y|+\mathbb{W}_2(\mu,\nu)).
\end{equation*}
\item[({\bf A}$_{b\sigma}^1$)] The functions $h_1, h_2, g$ are uniformly bounded and there exists a constant $L_1>0$ such that 
\begin{align*}
|g(\mu) - g(\nu)| \leq L_1 \mathbb{E}\left[ |X+Y|^2 \right] \mathbb{W}_2(\mu,\nu),
\end{align*}
where $X$ and $Y$ are random variables with distribution, $\mu$ and $\nu$, respectively.
\item[({\bf A}$_{b\sigma}^2$)] There exists a constant $L_2>0$ (independent of $\mu$) such that 
\begin{align*}
|b_1(t,x,\mu)| + \|\sigma_1(t,x,\mu)\| \leq L_2(1 + |x|).
\end{align*}
\end{enumerate}

\begin{remark}
Note that the decompositions~\eqref{e:dec} are compatible with the form of the proposed volatility structures in Section~\eqref{sec:vol_str}.
\end{remark}

In what follows, we will prove that the mean-field SDE (\ref{EE1}), with deterministic initial data, indeed has a unique strong solution. We remark that generic constants $C>0$ might change their value from line to line in a chain of inequalities. 

\begin{thm}[Existence of a unique solution]\label{TH:TH2}
Let $X_0=x$, for some given value $x \in \RR^d$. 
Further, let assumptions ({\bf A}$_b^1$)--({\bf A}$_\si^1$) and ({\bf A}$_{b\sigma}^1$)--({\bf A}$_{b\sigma}^2$) be satisfied. 
Then, the mean-field SDE~\eqref{EE1},  
\begin{equation*}
    \d X_t
    = b \left(t,X_t,\mu_t^{X} \right)\d t
       + \si(t,X_t,\mu_t^{X})\d W_t, 
    \quad X_0=\xi,
\end{equation*}
has a unique strong solution in $\mathcal{S}^{p}([0,T])$, for any $p \geq 2$.
\end{thm}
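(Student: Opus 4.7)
The plan is to construct the solution via a Picard-type iteration at the level of laws on path space. Starting from $X^0_t\equiv x$ with law $\mu^0$, define inductively $X^{n+1}$ as the unique strong solution in $\mathcal{S}^p([0,T])$ of the classical (non-mean-field) SDE
\begin{equation*}
    \mathrm{d}X^{n+1}_t
    =b\!\left(t,X^{n+1}_t,\mu^{X^n}_t\right)\mathrm{d}t
    +\sigma\!\left(t,X^{n+1}_t,\mu^{X^n}_t\right)\mathrm{d}W_t,
    \qquad X^{n+1}_0=x,
\end{equation*}
where $\mu^{X^n}_t$ is the marginal law of $X^n_t$. For each fixed $n$, the coefficients $x\mapsto b(t,x,\mu^{X^n}_t)$ and $x\mapsto \sigma(t,x,\mu^{X^n}_t)$ are Lipschitz in $x$ with linear growth by $(\mathbf{A}_b^1)$, $(\mathbf{A}_\sigma^1)$, $(\mathbf{A}_{b\sigma}^2)$, together with the boundedness of $h_1,h_2,g$ in $(\mathbf{A}_{b\sigma}^1)$, so that the classical Itô existence and uniqueness theorem applies at each step.

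The first technical step is to derive a uniform a~priori bound $\sup_n\mathbb{E}[\|X^n\|_T^p]\le K<\infty$ for some constant $K=K(T,p,x)$. Writing $b=b_1+xg(\mu)h_1$ and $\sigma=\sigma_1+h_2 x^\top g(\mu)$, the contributions of $b_1,\sigma_1$ are controlled by the linear growth bound $(\mathbf{A}_{b\sigma}^2)$, while the contributions of the remaining terms are controlled by the uniform bound on $h_1,h_2,g$ in $(\mathbf{A}_{b\sigma}^1)$. A standard application of the Burkholder--Davis--Gundy inequality and Gronwall's lemma yields the desired bound, independent of the iteration index and of the input law. This bound is crucial: it provides a uniform ceiling for $\mathbb{E}[|X^n_t+X^{n+1}_t|^2]$ so that the non-standard Lipschitz constant appearing in $(\mathbf{A}_{b\sigma}^1)$ becomes an effectively constant Lipschitz constant along the iteration.

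The main estimate is then the difference of successive iterates. Setting $\Delta_t^{n}:=\|X^{n+1}-X^{n}\|_t$, an application of BDG to
\begin{equation*}
    X^{n+1}_t-X^{n}_t
    =\int_0^t\bigl[b(s,X^{n+1}_s,\mu^{X^n}_s)-b(s,X^{n}_s,\mu^{X^{n-1}}_s)\bigr]\mathrm{d}s
    +\int_0^t\bigl[\sigma(s,X^{n+1}_s,\mu^{X^n}_s)-\sigma(s,X^{n}_s,\mu^{X^{n-1}}_s)\bigr]\mathrm{d}W_s
\end{equation*}
together with the Lipschitz assumptions $(\mathbf{A}_b^1)$, $(\mathbf{A}_\sigma^1)$ for the $b_1,\sigma_1$ components, and the weighted Lipschitz condition for $g$ in $(\mathbf{A}_{b\sigma}^1)$ applied to $\mu^{X^n}_s$ and $\mu^{X^{n-1}}_s$, delivers
\begin{equation*}
    \mathbb{E}[(\Delta_t^{n})^p]
    \le C\int_0^t\!\Bigl(\mathbb{E}[(\Delta_s^{n})^p]
    +\mathbb{E}[(\Delta_s^{n-1})^p]\Bigr)\,\mathrm{d}s,
\end{equation*}
where $C$ depends only on $T,p$, the Lipschitz constants, the bounds on $h_1,h_2,g$, and the a~priori bound $K$ (which controls the factor $\mathbb{E}[|X+Y|^2]$). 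Iterating this inequality as in the classical proof of the Picard--Lindelöf theorem yields $\sum_n \mathbb{E}[\|X^{n+1}-X^n\|_T^p]<\infty$, so the sequence $(X^n)$ is Cauchy in $\mathcal{S}^p([0,T])$ and converges to some $X\in\mathcal{S}^p([0,T])$. Convergence of the laws in $\mathbb{W}_2$ follows, and passing to the limit in the SDE identifies $X$ as a strong solution.

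The main obstacle is precisely the non-standard Lipschitz condition on $g$, which contains the factor $\mathbb{E}[|X+Y|^2]$ and therefore is not uniformly Lipschitz on all of $\mathscr{P}_2(\mathbb{R}^d)$. The resolution is to first close the uniform $L^p$ estimates for the Picard iterates (which are available because the $g$-dependent terms have bounded multipliers $h_1,h_2$), and only then use the weighted Lipschitz bound, on which the factor is now controlled by the uniform a~priori bound. Uniqueness is obtained by the same Gronwall argument applied to the difference of two hypothetical solutions, both of which automatically satisfy the same a~priori $L^p$ estimate, so that the same effective Lipschitz constant applies and forces the difference to vanish.
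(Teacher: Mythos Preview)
Your proposal is correct and follows essentially the same approach as the paper: a Picard iteration (cast in the paper as a fixed-point argument for the map $\Phi(\mu)=\mathrm{Law}(X^\mu)$ on $\mathscr{P}_2(\mathscr{C})$), combined with a uniform-in-iteration $L^p$ moment bound that converts the non-standard factor $\mathbb{E}[|X+Y|^2]$ in $(\mathbf{A}_{b\sigma}^1)$ into an effective constant along the sequence. Your organization---establishing the uniform a~priori bound first and only then running the contraction/Gronwall estimate---is in fact slightly cleaner than the paper's, which carries a measure-dependent constant $C_{\mu,\nu}$ through the Gronwall step and only afterwards remarks that it is uniform across Picard iterates.
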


\begin{proof}
For any given $\mu \in \mathscr{P}_2(\mathscr{C})$, we can reinterpret (\ref{EE1}) as a classical SDE
\begin{equation}\label{eq:Model1mu}
\mathrm{d} X_t^{\mu} = b^{\mu}(X_t^{\mu},t) \mathrm{d}t  + \sigma^{\mu}(X_t^{\mu},t) \mathrm{d}W_t, \quad X^{\mu}_0= x \in \mathbb{R}^d,
\end{equation}
where
\begin{align*}
b^{\mu}(X_t^{\mu},t) & :=  b(t,X_t^{\mu}, \mu_t) =  b_1(t,X_t^{\mu}, \mu_t) + X_t^{\mu}g(\mu_t)h_1(t) \\
\sigma^{\mu}(X_t^{\mu},t) & :=  \sigma(t,X_t^{\mu}, \mu_t) = \sigma_1(t,X_t^{\mu}, \mu_t) + h_2(t)X_t^{\mu,T}g(\mu_t)
\end{align*}
i.e., the coefficients do not depend on the law of $X_t^{\mu}$. Hence it can be seen as a classical (time-dependent) SDE, which has a unique strong solution in $\mathcal{S}^p([0,T])$, for $p \geq 2$ (see, e.g., \cite{XM}), i.e., there is a constant $C_p >0$ (independent of $\mu$, due to ({\bf A}$_{b\sigma}^1$)--({\bf A}$_{b\sigma}^2$)), such that
\begin{align*}
\mathbb{E}\left[\sup_{0 \leq t \leq T} |X_t^{\mu}|^p \right] \leq C_p.
\end{align*} 
In a next step, we introduce the map $\Phi: \mathscr{P}_2(\mathscr{C}) \to \mathscr{P}_2(\mathscr{C})$ by
\begin{equation*}
\Phi(\mu) = \text{Law}(X^{\mu}), 
\end{equation*}
i.e., for a fixed $\mu$, we solve the SDE (\ref{eq:Model1mu}) and set $\Phi(\mu)$ to be the law of the solution. 
Hence, $(X,\mu)$ is a solution of (\ref{EE1}) if and only if 
\begin{equation*}
X=X^{\mu} \text{ and }  \mu =  \Phi(\mu).
\end{equation*}
Consequently, we need to prove that $\Phi$ admits a unique fixed point, in order to show existence and uniqueness of a solution of (\ref{EE1}).  

Using the Lipschitz assumptions and applying the Burkholder-Davis-Gundy inequality yields, for a constant $C>0$ (changing its value from line to line), 

\begin{align*}
 \mathbb{W}^{(2),\mathscr{C}}_t(\Phi(\mu),\Phi(\nu))^2 &\leq \mathbb{E}\left[\sup_{0 \leq s \leq t} |X_s^{\mu} - X_s^{\nu} |^2 \right] \\
 & \leq C \mathbb{E} \left[\int_{0}^{t} |b_1(s,X_s^{\mu},\mu_s) - b_1(s,X_s^{\nu}, \nu_s)|^2 \mathrm{d}s \right]  \\
 & \quad + C \mathbb{E} \left[\int_{0}^{t} |X_s^{\mu}g(\mu_s)h_1(s) - X_s^{\nu} g(\nu_s)h_1(s) |^2 \mathrm{d}s \right]  \\
& \quad + C \mathbb{E} \left[\int_{0}^{t} \|h_2(s)X_s^{\mu,T}g(\mu_s) - h_2(s)X_s^{\mu,T}g(\nu_s) \|^2 \mathrm{d}s \right] \\ 
 & \quad + C \mathbb{E}  \left[ \sup_{0 \leq s \leq t} \left| \int_{0}^{s} ( \sigma_1(u,X_u^{\mu},\mu_u) - \sigma_1(u,X_u^{\nu},\mu_u)) \mathrm{d}W_u \right|^2 \right] \\
 & \leq C \mathbb{E} \left[ \int_{0}^{t} (\| X^{\mu} - X^{\nu} \|^2_s + \mathbb{W}_2(\mu_s,\nu_s)^2 )\mathrm{d}s \right] \\
 & \quad + C \mathbb{E} \left[\int_{0}^{t} |X_s^{\mu}g(\mu_s) - X_s^{\nu} g(\nu_s) |^2 \mathrm{d}s \right]. 
\end{align*}
Note that, employing the Lipschitz property and boundedness of $g$, ({\bf A}$_{b\sigma}^1$) and the fact that
\begin{equation*}
\mathbb{E}\left[\sup_{0 \leq t \leq T} |X_t^{\mu}|^p \right] < \infty,
\end{equation*}
where we recall that the bound is uniform in $\mu \in \mathscr{P}_2(\RR^d)$, due to ({\bf A}$_{b\sigma}^1$)--({\bf A}$_{b\sigma}^2$), we obtain the estimate 
\begin{align*}
 \mathbb{E} \left[\int_{0}^{t} |X_s^{\mu}g(\mu_s) - X_s^{\nu} g(\nu_s) |^2 \mathrm{d}s \right] 
 & \leq  
 C \mathbb{E} \left[ \int_{0}^{t} \| X^{\mu} - X^{\nu} \|^2_s \mathrm{d}s \right] + C \int_{0}^{t} \mathbb{E}\left[ |X_s^{\nu}|^2 \right] \mathbb{W}_2(\mu_s,\nu_s)^2 \mathrm{d}s \\
 & \leq  C_{\mu,\nu} \mathbb{E} \left[ \int_{0}^{t} (\| X^{\mu} - X^{\nu} \|^2_s + \mathbb{W}_2(\mu_s,\nu_s)^2 )\mathrm{d}s \right],
\end{align*}
where we used the fact that $\mathbb{E}[|X_s + Y_s|^2] \leq C_{\mu,\nu}$, with $X_s$ and $Y_s$ having distribution $\mu_s$ and $\nu_s$, respectively, and the boundedness of $g$. 

Consequently, Gronwall's inequality implies
\begin{equation*}
\mathbb{E}\left[\sup_{0 \leq s \leq t} |X_s^{\mu} - X_s^{\nu} |^2 \right]  \leq C_{\mu,\nu} \int_{0}^{t} \mathbb{W}_2(\mu_s,\nu_s)^2 \mathrm{d}s.
\end{equation*}
Hence, we arrive at
\begin{align}\label{eq:PIter}
 \mathbb{W}^{(2),\mathscr{C}}_t(\Phi(\mu),\Phi(\nu))^2 & \leq  \mathbb{E} \left[\sup_{0 \leq s \leq t} |X_s^{\mu} - X_s^{\nu} |^2 \right]  
 \leq C_{\mu,\nu} \int_{0}^{t} \mathbb{W}_2(\mu_s,\nu_s)^2 \mathrm{d}s 
  \leq  C_{\mu,\nu} \int_{0}^{t} \mathbb{W}^{(2),\mathscr{C}}_s(\mu,\nu)^2 \mathrm{d}s,
\end{align}
for any $t \in [0,T]$ with a constant $C_{\mu,\nu}>0$ depending on $\mu$ and $\nu$. 

Uniqueness follows from the previous inequality and another application of Gronwall's inequality. Existence is a consequence of a standard Picard-iteration type argument. We start with an arbitrary $\mu^0 \in \mathscr{P}_2(\mathscr{C})$ and then define the iterates $\mu^{k+1} = \Phi(\mu^k)$, for $k \geq 0$. This sequence forms a Cauchy sequence, which can be shown by iterating inequality (\ref{eq:PIter}) sufficiently often, with a limit that is a fixed point of $\Phi$.  
Therefore the SDE (\ref{EE1}) has unique strong solution in $\mathcal{S}^p([0,T])$, for $p \geq 2$. Also note that we have 
\begin{equation*}
\sup_{n \geq 0} \sup_{0 \leq t \leq T} \int_{\RR^d} |y|^p \mu^{X^{n}}_t(\mathrm{d}x) < \infty,
\end{equation*}
$p \geq 2$, which means that we have a uniform moment bound across all Picard-steps. Therefore, we conclude that the constant $C_{\mu,\nu}$ in \eqref{eq:PIter} only depends on the choice of $\mu^{0}$. 

\end{proof}
\begin{remark}[Link to the Existence of MFLMMs]\label{rem:max}
We note that if, both, drift and diffusion coefficient have the form 
\begin{equation*}
X_t g(t)e^{-\int_{\RR} x^2 \mu_t(\mathrm{d}x) + \left(\int_{\RR} x \mu_t(\mathrm{d}x) \right)^2},
\end{equation*}
where $\mu_t$ is the law of $X_t$ and $g: [0,T] \to \RR$ is a bounded function, then the assumptions of the above result are satisfied. To be precise, using the elementary inequality 
\begin{equation}\label{eq:usefull}
\left| \exp(x) - \exp(y) \right| \leq |x-y| \left(\exp(x) + \exp(y) \right)
\end{equation}
for all $x,y \in \RR$, we deduce, for any coupling $\pi$ of $\mu$ and $\nu$, that
\begin{align}
&\left|e^{-\int_{\RR} x^2 \mu(\mathrm{d}x) + \left(\int_{\RR} x \mu(\mathrm{d}x) \right)^2} - e^{-\int_{\RR} x^2 \nu(\mathrm{d}x) + \left(\int_{\RR} x \nu(\mathrm{d}x) \right)^2} \right| \nonumber \\
& \leq \left |-\int_{\RR} x^2 \mu(\mathrm{d}x) + \left(\int_{\RR} x \mu(\mathrm{d}x) \right)^2 + \int_{\RR} x^2 \nu(\mathrm{d}x) - \left(\int_{\RR} x \nu(\mathrm{d}x) \right)^2 \right| \times\nonumber \\
& \quad \times \left|e^{-\int_{\RR} x^2 \mu(\mathrm{d}x) + \left(\int_{\RR} x \mu(\mathrm{d}x) \right)^2} + e^{-\int_{\RR} x^2 \nu(\mathrm{d}x) + \left(\int_{\RR} x \nu(\mathrm{d}x) \right)^2} \right| \label{eq:usefullstart}\\
& \leq C \left| \int_{\RR^2} |x -y||x+y| \pi(\mathrm{d}x, \mathrm{d}y) \right| + \left| \int_{\RR^2} |x-y|  \pi(\mathrm{d}x, \mathrm{d}y)\right| \left| \int_{\RR^2} |x+y|  \pi(\mathrm{d}x, \mathrm{d}y)  \right|\nonumber \\
& \leq C \left( \int_{\RR^2} |x -y|^2 \pi(\mathrm{d}x, \mathrm{d}y) \right)^{1/2}  \left( \int_{\RR^2} |x + y|^2 \pi(\mathrm{d}x, \mathrm{d}y) \right)^{1/2} + \left| \int_{\RR^2} |x-y|  \pi(\mathrm{d}x, \mathrm{d}y)\right| \left| \int_{\RR^2} |x+y|  \pi(\mathrm{d}x, \mathrm{d}y)  \right|,\nonumber
\end{align}
for some constant $C>0$.
Since the above inequality holds for any coupling $\pi$, we also have 
\begin{align}\label{eq:WA}
&\left|e^{-\int_{\RR} x^2 \mu(\mathrm{d}x) + \left(\int_{\RR} x \mu(\mathrm{d}x) \right)^2} - e^{-\int_{\RR} x^2 \nu(\mathrm{d}x) + \left(\int_{\RR} x \nu(\mathrm{d}x) \right)^2} \right| \nonumber \\
& \leq C \mathbb{W}_2(\mu, \nu) \left( \left( \int_{\RR^2} |x + y|^2 \pi(\mathrm{d}x, \mathrm{d}y) \right)^{1/2} + \int_{\RR^2} |x+y|  \pi(\mathrm{d}x, \mathrm{d}y)  \right).
\end{align}
In particular, we also have in this case, for each Picard-iteration, a moment-bound of the SDE with fixed measure (\ref{eq:Model1mu}), which is independent of the present Picard-step. Based on \eqref{eq:usefull} one can derive
\begin{equation*}
\left|\exp\{-x^+\}-\exp\{-y^+\}\right|\leq |x-y|\left(\exp\{-x\}+\exp\{-y\}\right), 
\end{equation*}
for $x,\,y\in\RR$ and $x^+=\max\{x,0\}$. This inequality fortunately leads us to
\begin{align*}
 &\left| e^{-\left[\int_{\RR}x^2 \mu(\mathrm{dx})-\left(\int_Rx \mu(\mathrm{dx})\right)^2-\alpha\right]^+}-e^{-\left[\int_{\RR}x^2 \nu(\mathrm{dx})-\left(\int_Rx \nu(\mathrm{dx})\right)^2-\alpha\right]^+}\right|\\
 &\leq\left|\int_{\RR}x^2 \mu(\mathrm{dx})-\left(\int_Rx \mu(\mathrm{dx})\right)^2-\int_{\RR}x^2 \nu(\mathrm{dx})+\left(\int_Rx \nu(\mathrm{dx})\right)^2\right|\times\\
 &\quad\times\left(e^{-\int_{\RR}x^2 \mu(\mathrm{dx})+\left(\int_Rx \mu(\mathrm{dx})\right)^2+\alpha}+e^{-\int_{\RR}x^2 \nu(dx)+\left(\int_Rx \nu(\mathrm{dx})\right)^2+\alpha}\right),
\end{align*}
for some parameter $\alpha>0$. This resembles \eqref{eq:usefullstart} such that we can draw the same conclusions as in \eqref{eq:WA}. Terms of these form are relevant for a distribution dependent LIBOR market model, see (\ref{eq:diffusion}).
\end{remark}

\begin{remark}[Complementing the proof Theorem~\ref{TH:TH1}]\label{rem:ass}
In the proof of Theorem \ref{TH:TH1}, a coefficient depending on $\tilde{\mu}_t^i$, the law of 
\begin{equation*}
X_t:=(L_t^i,Z^{i,N}_t, \ldots, Z^{N-1,N}_t),
\end{equation*}
under $\mathcal{Q}^{N}$, appears. We analyse this coefficient for $i=N-1$ (similarly for other values of $i$). As shown in Section~\ref{sec:model}, this coefficient is
\begin{align*}
\tilde{\sigma}_{N-1}(t,\tilde{\mu}_t^{N-1}) = \sigma_{N-1}^{(1)}(t) e^{-\mathbb{E}^{\mathcal{Q}^N} \left[ \left( L_t^{N-1} \right)^2 Z^{N-1,N}_t   \right] + \left(\mathbb{E}^{\mathcal{Q}^N} \left[L_t^{N-1} Z^{N-1,N}_t   \right] \right)^2}.
\end{align*}
For two different measures $\tilde{\mu}^i,\tilde{\nu}^i \in \mathscr{P}_2(\mathscr{C})$, with marginals $\tilde{\mu}_t^i,\tilde{\nu}_t^i \in \mathscr{P}_2(\RR^2)$, we have  the estimate
\begin{align*}
& \left|e^{-\mathbb{E}^{\mathcal{Q}^N,\tilde{\mu}^i} \left[ \left( L_t^{N-1} \right)^2 Z^{N-1,N}_t   \right] + \left(\mathbb{E}^{\mathcal{Q}^N,\tilde{\mu}^i} \left[L_t^{N-1} Z^{N-1,N}_t   \right] \right)^2} -  e^{-\mathbb{E}^{\mathcal{Q}^N,\tilde{\nu}^i} \left[ \left( L_t^{N-1} \right)^2 Z^{N-1,N}_t   \right] + \left(\mathbb{E}^{\mathcal{Q}^N,\tilde{\nu}^i} \left[L_t^{N-1} Z^{N-1,N}_t   \right] \right)^2} \right| \\
& \leq C \mathbb{W}_2(\mu^{i}_t, \nu^{i}_t),
\end{align*} 
where $C>0$ depends on the moments (with respect to $\tilde{\mu}_t^i$, and $\tilde{\nu}_t^i$) of $L_t^{N-1}$ and $Z^{N-1,N}_t$. Also in this case the moments are uniformly bounded across all Picard-iterations and the proof of Theorem \ref{TH:TH1} also applies to this framework.
Furthermore, from the second part of Remark \ref{rem:max} we have that for the case of a measure dependence of the form,
$$
\exp \left\{
-\left[\mathbb{E}^{\mathcal{Q}^N,\tilde{\mu}^i} \left[ \left( L_t^{N-1} \right)^2 Z^{N-1,N}_t   \right] + \left(\mathbb{E}^{\mathcal{Q}^N,\tilde{\mu}^i} \left[L_t^{N-1} Z^{N-1,N}_t   \right] \right)^2-\alpha \right] ^+
\right\},
$$
the conclusion hold true as-well.
\end{remark}

\section{Conclusions}
We have introduced MF-LMM, a mean-field extension of the classical LIBOR Market Model. The main motivation for this is the  reduction of blow-up probability which is particularly relevant in the context of the valuation of long term guarantees. In this work we have studied the following aspects of MF-LMM:  

\begin{enumerate}
    \item[(1)] 
    Theorem~\ref{TH:TH1} proves existence and uniqueness of the MF-LMM, based on the results of Section~\ref{Sec:WEll}.
    \item[(2)] 
    Theorem~\ref{thm:cap} contains a Black formula  for a given measure flow in the mean-field setting.
    \item[(3)] 
    Section~\ref{sec:mfCal} adapts the Picard iteration construction of Theorem~\ref{TH:TH2} to devise a calibration algorithm based on Theorem~\ref{thm:cap}. The feasibility of this algorithm is shown in a numerical example.
    \item[(4)] In Section~\ref{sec:numerics}, we use an Euler-Maruyama discretization to simulate several variants of the MF-LMM. The numerical examples demonstrate (Figures~\ref{fig:hist_RMW} and \ref{fig:hist}) that a judicial choice of the mean-field dependence can lead to a reduction of blow-up probability (i.e., of number of scenarios breaching a certain predefined threshold), as expected. 
    \item[(5)] 
    The numerical approach of Section~\ref{sec:numerics} is tailor-made to facilitate the extension of classical LMMs to the mean-field realm. Thus, given the above mentioned well-posedness result, it can be readily applied to augment existing models -- whenever exploding rates are an issue.  
\end{enumerate}

Irrespective of blow-up considerations, one may also envisage to use the method of Theorem~\ref{thm:cap}, in conjunction with Section~\ref{sec:mfCal}, as additional degrees of freedom in the calibration process, e.g.\ if a fit to out-of-the-money caps is desirable. A possible topic for future studies is the extension of the Heath-Jarrow-Morton methodology to the mean-field setting.

\section*{Funding}

S. Desmettre is supported by the Austrian Science Fund (FWF) project  F5507-N26, which is part of the Special Research Program \textit{Quasi-Monte Carlo Methods: Theory and Applications}.\\
S. Thonhauser is supported by the Austrian Science Fund (FWF) project P33317.

\section*{Acknowledgements}

We wish to thank Wolfgang Stockinger for many fruitful and extensive discussions, in particular concerning the existence and uniqueness of the underlying mean-field SDE.


\begin{thebibliography}{99}
\addcontentsline{toc}{section}{References} 
\bibitem{AP20}
K. Armel, F. Planchet,
\emph{The economic evaluation of life insurance liabilities. Pitfalls, best practices and recommendation for relevant implementation}
(2020)
Working Paper. ISFA – SAF Laboratory. University of Lyon.
\bibitem{BMBP18} M.\ Bauer, T. Meyer-Brandis, F. Proske \textit{Strong solutions of mean-field stochastic differential equations with irregular drift}, Electronic Journal of Probability, vol.\ 23, pp.\ 1-35, 2018.
\bibitem{Black76} F.\ Black, \textit{The pricing of commodity contracts}, Journal of Financial Economics, vol.\ 3, pp.\ 167-179, 1976.
\bibitem{Brigo} F.\ Brigo and F.\ Mercurio, \textit{Interest Rate Models - Theory and Practice}, Springer, 2006. 
\bibitem{BLPR} R.\ Buckhahn, L.\ Li., S.\ Peng and C.\ Rainer, \textit{Mean-Field stochastic differential equations and associated PDEs}, The Annals of Probability, vol.\ 45, No.\ 2, pp.\ 824-878, 2017.
\bibitem{CD} R.\ Carmona and F.\ Delarue, \textit{Probabilistic Theory of Mean Field Games with Applications I}, vol.\ 84 of Probability Theory and Stochastic Modelling, Springer International Publishing, 1st ed., 2017. 
\bibitem{L1}
 Commission Delegated Regulation (EU) 2015/35 of 10 October 2014 supplementing Directive
2009/138/EC of the European Parliament and of the Council on the taking-up and pursuit of the business
of Insurance and Reinsurance (Solvency II).
\bibitem{EIOPA_curve}
EIOPA Risk free interest rate term structure
\href{https://www.eiopa.europa.eu/tools-and-data/risk-free-interest-rate-term-structures\_en}{https://www.eiopa.europa.eu/tools-and-data/risk-free-interest-rate-term-structures\_en}
\bibitem{Fil}
D. Filipovic,
\emph{Term structure models}, Springer 2009. 
\bibitem{GH21}
F. Gach, S. Hochgerner. 
\emph{Estimation of future discretionary benefits in traditional life insurance}, \href{https://arxiv.org/abs/2101.06077}{arXiv/2101.06077}.
\bibitem{SG} S.\ Gerhold, \textit{Moment explosion in the LIBOR market model}, Statistics and Probability Letters, Vol.\ (81), pp.\ 560-562, 2011. 
\bibitem{DAV1} German Association of Actuaries, \textit{Anforderungen an einen ökonomischen Szenariengenerator (ESG) und daraus resultierende Bewertungsszenarien in Bezug auf deren Arbitragefreiheit}, Ergebnisbericht, 2018.
\bibitem{DAV2} German Association of Actuaries, \textit{Anforderungen an einen ökonomischen Szenariengenerator (ESG)}, Ergebnisbericht, 2017.

\bibitem{HG19}
S. Hochgerner, F. Gach, 
\emph{Analytical validation formulas for best estimate calculation in traditional life insurance}, Eur. Actuar. J. \textbf{9}, 423–443 (2019). \href{https://doi.org/10.1007/s13385-019-00212-2}{https://doi.org/10.1007/s13385-019-00212-2}
\bibitem{ISDA1} ISDA and Bloomberg, \textit{{IBOR} Fallback Rate Adjustments Rule Book}, Bloomberg Professional Services, https://data.bloomberglp.com/professional/sites/10/IBOR-Fallback-
Rate-Adjustments-Rule-Book.pdf, 2020.
\bibitem{ISDA2} ISDA, Benchmark Reform and Transition from LIBOR, \href{https://www.isda.org/2020/05/11/benchmark-reform-and-transition-from-libor/}, 2020.
\bibitem{diffusion} M.S.\ Joshi and R.\ Rebonato, 
\textit{A displaced-diffusion stochastic volatility LIBOR market model: motivation, definition and implementation}, 
Quantitative Finance (vol.3, no. 6), pp.458-469,, Routledge, 2003.
\bibitem{SAB} I.\ Karatzas and S.\ E.\ Shreve, \textit{Brownian Motion and Stochastic Calculus}, Graduate Texts in Mathematics. Springer-Verlag, New York, second edition, 1991.
\bibitem{LM19} A.\ Lyashenko and F.\ Mercurio, \textit{Looking Forward to Backward-Looking Rates: A Modeling Framework for Term Rates Replacing LIBOR}, available at SSRN: https://ssrn.com/abstract=3330240, 2019.
\bibitem{LM19_} A.\ Lyashenko and F.\ Mercurio, \textit{Looking Forward to Backward-Looking Rates: Completing the Generalized Forward Market Model}, available at SSRN: https://ssrn.com/abstract=3482132, 2019.
\bibitem{XM} X.\ Mao, \textit{Stochastic Differential Equations and Applications}, Horwood Publishers Ltd., 1997.
\bibitem{MR97}
M. Musiela, M. Rutkowski,
\emph{Continuous-time term structure models: forward measure approach},
Finance Stochast. \textbf{1} (1997) pp261-291.
\bibitem{MR09}
M. Musiela, M. Rutkowski,
\emph{Martingale methods in financial modelling}, 
Springer, 3rd Ed. 2009. 
\bibitem{VPSine} V.\ Piterbarg, \textit{Arc-Sine Law and the Libor Reform}, available at https://ssrn.com/abstract=3684535, 2020.
Perfect Hedger and the Fox Wiley, 2005
\bibitem{Rebonato} R.\ Rebonato, \textit{Volatility and Correlation: The Perfect Hedger and the Fox}, Second Edition, Wiley Finance Series, 2005.

\bibitem{RMW}
R. Rebonato, K. McKay, R. White,
\emph{The SABR/LIBOR Market Model}, Wiley 2009. 


\bibitem{VELP17}
J. Vedani, N. El Karoui, S. Loisel, J.-L. Prigent, 
\emph{Market inconsistencies of market-consistent European
life insurance economic valuations: pitfalls and practical solutions}, Eur. Actuar. J. 7 (2017).



\end{thebibliography}
\end{document}

\todo{Stichworte fuer die Intro:
\begin{itemize}
    \item 
    Mean field LMMs are a class of models. We will treat one representative of this class in detail but many different parameterizations and model realizations are possible. 
    \item
    Scenario dynamics should be realistic
    \item
    Aspects of long term guarantee valuation
    \item
    Economically reasonable because observed interest rate volatility is bounded. Central banks act to stabilize interest rate movements around an ultimate forward target, thus effectively reducing the variance of all possible interest rate scenarios.   
    \item
    Ease of calibration and economic interpretation of usual LMMs is retained by our model.
    \item
    Particularly suited for valuation of long-term interest rate derivatives as in life insurance with profit participation.
    \item
    Theoretical framework for `volatility freezing' that is routinely employed by ESGs in the context of best estimate calculation. 
    \item
    The volatility freeze threshold is derived from econometric data (real world) but implemented as a dependence structure in the volatility functions with respect to the risk neutral forward measures. This is an apparent inconsistency. However, it is the same approach that is followed in all (most, many,....) volatility specifications: what cannot be inferred from no arbitrage theory is determined by economic plausibility arguments (Rebonato....).  
    \item
    (Aus Sec.~\ref{sec:numerics}:)
    Insurance companies depend on the stochastic modeling of interest rates in order to establish a market-consistent pricing of long term guarantees and options. 
Due to the nature of the LIBOR market model (LMM), simulated forward rates follow a log-normal distribution and tend to \emph{explode} within the simulation process which complicates the pricing of longterm guarantees. 
The theoretical analysis in Section \ref{sec:model} outlines a mean field approach for the well-known LMM through the specification of the volatility structure under a joint law of the forward rates. 
In the following, we present a possible implementation of the MF-LMM and outline the dampening effect on exploding rates as a piece of evidence for the quality of the MF-LMM approach. 
We draw comparisons to the \emph{volatlity freeze} method which became a standard market practice for dealing with exploding rates where the volatility is set to zero beyond a certain interest rate level. 
\end{itemize}
}

\todoStef{
In this contribution we present an extension of the class of Libor-Market-Models (LMMs) based on the concept of mean-field SDEs. 
Despite their practical advantages, 
basic LMMs obey some features which complicate or limit their usage on long time horizons. 
The first to be mentioned is referred to as \emph{blow up} problem in practice or \emph{moment explosions} in theoretical contributions, see Gerhold REF. and from a more general perspective Friz \& Keller-Ressel REF. This problem is simply based on the fact that single interest-rates are log-normally distributed, with the consequence that their moment generating function does not exist for all positive arguments. As straightforward consequence one gets that, there have to appear realizations of the rates which become large along a path at some point in time, when performing a huge number of Monte-Carlo simulations. Unfortunately, such extreme paths cause problems in economic scenario generators or asset liability management models, see for example AAE 2016 report p.3/11 REF and AFI 2017 REF.\\
In practice this problem is typically treated by either freezing the volatility or artificially bounding the respective interest rate itself.  
\\
Another feature concerns the common practice of calibration insurance, see Vedani et al. REF, who point out that typically models are calibrated to short term 
\\
displaced diffusion LMM (is an approach to curtail blow ups in practical applcations but theoretically phenomenon is still preset since the underlying distribution is just shifted) or model variations which include a stochastic volatility (Devineau - Joshi \& Rebonato
recycling of paths in esg/ asset liability management models. 
a cap on interest rates directly leads to problems with model validation tests - martingale property is lost.
Vedani et al. on the usage of LMMs in inusrance framework - short term calibration but long-term usage
-check Bergomi - local stochastic volatility distribution dependent coefficients.
Important: Highlight feasibility but technical complications for the well-posedness.
}

Mean-field SDEs find several applications in mathematical finance, such as \cite{JGPH,JGU}, where the calibration of local stochastic volatility models is considered.